\DeclareMathOperator\supp{supp}
\numberwithin{equation}{section}
\newcommand{\R}{\mathbb{R}}
\newcommand{\D}{\mathrm{d}}
\newtheorem{theorem}{Theorem}
\newtheorem{definition}{Definition}[section]
\newtheorem{assumption}{Assumption}
\newtheorem{lemma}{Lemma}[section]
\newtheorem{proposition}{Proposition}[section]
\newtheorem{remark}{Remark}[section]
\title[Local data inverse problem for the polyharmonic operator]{Local data inverse problem for the polyharmonic operator with anisotropic perturbations}
\author[Bhattacharyya and Kumar]{Sombuddha Bhattacharyya* and Pranav Kumar**}
\address{*Department of Mathematics, Indian Institute of Science Education and Research, Bhopal.
\newline
E-mail:{\tt \ sombuddha@iiserb.ac.in}}
\address {**Department of Mathematics, Indian Institute of Science Education and Research, Bhopal.
\newline
E-mail:{\tt \ pranav19@iiserb.ac.in}}
\begin{document}

\begin{abstract}
In this article, we study an inverse problem with local data for a linear polyharmonic operator with several lower order tensorial perturbations. 
We consider our domain to have an inaccessible portion of the boundary where neither the input can be prescribed nor the output can be measured. We prove the unique determination of all the tensorial coefficients of the operator from the knowledge of the Dirichlet and Neumann map on the accessible part of the boundary, under suitable geometric assumptions on the domain.
\end{abstract}

\subjclass [2020]{Primary 35R30, 31B20, 31B30, 35J40.}
\keywords {Calder\'{o}n problem, polyharmonic operator, Anisotropic perturbation, Tensor tomography, Momentum ray transform.}

\maketitle

\section{Introduction}
Let $\Omega$ be an open, bounded domain in $\R^{n}$ with smooth boundary $\partial\Omega$, where $n\geq3$. We consider the perturbed polyharmonic operator $\mathcal{L}(x,D)$ of order $2m$ with perturbations up to order $m$ of the form:
\begin{equation}\label{Operator}
\mathcal{L}(x, D):= (-\Delta)^m + \sum_{j=0}^{m-1} \sum_{i_{1}, \cdots, i_{m-j}=1}^{n} A_{i_{1} \cdots i_{m-j}}^{m-j}(x) D_{i_{1} \cdots  
 i_{m-j}}^{m-j}+q(x); \quad m\geq 2,
\end{equation}
where $D_{i_{1} \dots i_{j}}^{j}=\frac{1}{\mathrm{i}^{j}} \frac{\partial^{j}}{\partial x_{i_{1}\dots} \partial x_{i_{j}}}$, $i_1, \dots, i_j\in \{1,2,\dots, n\}$, $1\leq j\leq m$, and the perturbations coefficients $A_{i_{1} \cdots i_{j}}^{j} = A^{j}\in C_{c}^{\infty}(\Omega, \mathbb{C}^{n^{j}})$ symmetric tensor fields of order $j$  for $1 \leq j \leq m$ and $q(x) \in L^{\infty}(\Omega, \mathbb{C})$.
In this article, we solve a partial data inverse problem related to the operator  \eqref{Operator}. We prove the unique recovery of all the lower order perturbations $A^{j} \text { for } 1\leq j \leq m$,  and $q$ %of the polyharmonic operator \eqref{Operator} 
in $\Omega$ from the boundary data measured only on a part $\Gamma$ of the boundary $\partial\Omega$ while the remaining part $\partial \Omega \setminus \Gamma$ is part of a hyperplane. 

Inverse problems for recovering unknown coefficients of PDEs were pioneered by Alberto Calder\'on in his work \cite{Calderon_Paper} on electrical impedance tomography. 
The groundbreaking work by Sylvester and Uhlmann in \cite{sylvester1987global} on the inverse problem for the conductivity equation inspired subsequent developments. 
For a comprehensive survey of Calder\'on-type inverse problems, see \cite{uhlmann2009electrical}.
The study of coefficient recovery problems for biharmonic and polyharmonic operators was initiated by Krupchyk, Lassas, and Uhlmann in \cite{krupchyk2012determining,krupchyk2014inverse}. 
It was soon followed by \cite{Yang-Paper,ghosh2016determination,BG_19,bhattacharyya2021unique,SS_linearize_polyharmonic}, where coefficient recovery for biharmonic and polyharmonic operators in a bounded domain with dimensions $n\geq 3$ have been discussed.
The higher order elliptic operators, as represented by \eqref{Operator}, have received significant attention in various areas, such as physics and geometry, due to their connection with the theory of elasticity equation on thin domains given by the Kirchhoff plate equation (perturbed biharmonic operator) and the study of the Paneitz-Branson equation in conformal geometry. 
For more applications of biharmonic and polyharmonic operators, see \cite{selvadurai2000partial,Polyharmonic_Book}.

To describe our setup, let us define the following notations.
Define $\mathcal{D}(\mathcal{L}(x,D)), $ where,
\[
\mathcal{D}(\mathcal{L}(x, D)):=\Big{\{}u \in H^{2 m}(\Omega): u|_{\partial\Omega}=(-\Delta )u|_{\partial\Omega} =\cdots =(-\Delta)^{m-1}u|_{\partial\Omega} =0\Big{\}.}
\]
The operator $\mathcal{L}(x, D)$ with the domain $\mathcal{D}(\mathcal{L}(x, D))$ is an unbounded closed operator on $L^{2}(\Omega)$ with discrete spectrum \cite{Gerd_Grubb}.
\begin{assumption}\label{Assumption-1}
We assume that $0$ is not an eigenvalue of the operator $\mathcal{L}(x, D): \mathcal{D}(\mathcal{L}(x, D)) \rightarrow L^{2}(\Omega)$.
\end{assumption}
Under Assumption \ref{Assumption-1}, the following boundary value problem with Navier boundary conditions:
\begin{equation}\label{Direct Problem}
    \begin{aligned}
      \mathcal{L}(x, D) u &=0 \quad \text { in } \Omega ,\\
 \left(u|_{\partial\Omega},(-\Delta )u|_{\partial\Omega}, \cdots , (-\Delta)^{m-1}u|_{\partial\Omega}\right) &=\left(f_{0}, f_{1}, \cdots, f_{m-1}\right),
    \end{aligned}
\end{equation}
has a unique solution $u \in H^{2 m}(\Omega)$ for any $f :=\left(f_{0}, f_{1}, \cdots, f_{m-1}\right) \in \prod_{i=0}^{m-1} H^{2m-2i-\frac{1}{2}}(\partial \Omega)$.
We state the existence and uniqueness of a solution to the problem \eqref{Direct Problem}.

\begin{theorem}[\cite{tanabe2017functional} Lemma 5.13]
Let the coefficients $A^{j}$ for $1 \leq j \leq m$ belong to $C_{c}^{\infty}(\Omega, 
\mathbb{C}^{n^{j}})$ and $q(x) \in L^{\infty}(\Omega, \mathbb{C})$ in \eqref{Operator}. Suppose Assumption \ref{Assumption-1} and $f=\left(f_{0}, f_{1}, \ldots, f_{m-1}\right) \in \prod_{i=0}^{m-1} H^{2 m-2 i-\frac{1}{2}}(\partial \Omega)$. Then there exists a unique solution $u\in H^{2m}(\Omega)$ to the problem  \eqref{Direct Problem} satisfying 
\[
\|u\|_{H^{2m}(\Omega)} \leq C\|f\|_{\prod_{i=0}^{m-1}H^{2m-2i-\frac{1}{2}}(\partial \Omega)},
\]
where $C>0$ is a constant.
\end{theorem}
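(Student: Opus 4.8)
The plan is to reduce the inhomogeneous problem \eqref{Direct Problem} to one with vanishing Navier data, where Assumption \ref{Assumption-1} applies directly, after lifting the boundary data by a function with controlled $H^{2m}(\Omega)$ norm. \textbf{Lifting the boundary data:} exploiting the iterated structure of $(-\Delta)^m$, I would construct $w\in H^{2m}(\Omega)$ with $(-\Delta)^j w|_{\partial\Omega}=f_j$ for $0\le j\le m-1$ by solving a descending cascade of Dirichlet problems: take $w_{m-1}$ to solve $-\Delta w_{m-1}=0$ in $\Omega$ with $w_{m-1}|_{\partial\Omega}=f_{m-1}$, and then recursively, for $j=m-2,\dots,0$, let $w_j$ solve $-\Delta w_j=w_{j+1}$ in $\Omega$ with $w_j|_{\partial\Omega}=f_j$. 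Since $f_j\in H^{2m-2j-\frac12}(\partial\Omega)$, the standard regularity estimate for the Dirichlet Laplacian gives, by downward induction on $j$, that $w_j\in H^{2(m-j)}(\Omega)$ with $\|w_j\|_{H^{2(m-j)}(\Omega)}\le C\|f\|$. Setting $w:=w_0$ one obtains $w\in H^{2m}(\Omega)$, $(-\Delta)^j w=w_j$, hence $(-\Delta)^j w|_{\partial\Omega}=f_j$, and $\|w\|_{H^{2m}(\Omega)}\le C\|f\|$.

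\textbf{Reduction and solvability.} Looking for $u=w+\tilde u$ with $\tilde u\in\mathcal{D}(\mathcal{L}(x,D))$, problem \eqref{Direct Problem} becomes equivalent to $\mathcal{L}(x,D)\tilde u=-\mathcal{L}(x,D)w=:F$ in $\Omega$. Since $w\in H^{2m}(\Omega)$, $A^j\in C_c^\infty(\Omega)$, $q\in L^\infty(\Omega)$, the perturbation $\mathcal{L}(x,D)-(-\Delta)^m$ has order at most $m$, and $D^j w\in H^{2m-j}(\Omega)\hookrightarrow L^2(\Omega)$ for $j\le m$, we get $F\in L^2(\Omega)$ with $\|F\|_{L^2(\Omega)}\le C\|f\|$. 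The operator $(-\Delta)^m$ with domain $\mathcal{D}(\mathcal{L}(x,D))$ is self-adjoint with compact resolvent, and the lower-order perturbation is relatively compact; hence $\mathcal{L}(x,D):\mathcal{D}(\mathcal{L}(x,D))\to L^2(\Omega)$ is closed with compact resolvent and its spectrum consists of eigenvalues only. By Assumption \ref{Assumption-1}, $0$ lies in the resolvent set, so $\mathcal{L}(x,D)$ is bijective; being the inverse of a closed bijective operator, $\mathcal{L}(x,D)^{-1}$ is bounded on $L^2(\Omega)$, and since it maps into $\mathcal{D}(\mathcal{L}(x,D))\subset H^{2m}(\Omega)$ with $H^{2m}$-limits preserving the homogeneous trace conditions, the closed graph theorem makes it bounded from $L^2(\Omega)$ into $H^{2m}(\Omega)$. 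Therefore $\tilde u:=\mathcal{L}(x,D)^{-1}F$ is the unique element of $\mathcal{D}(\mathcal{L}(x,D))$ with $\mathcal{L}(x,D)\tilde u=F$, it satisfies $\|\tilde u\|_{H^{2m}(\Omega)}\le C\|F\|_{L^2(\Omega)}\le C\|f\|$, and $u:=w+\tilde u$ solves \eqref{Direct Problem} with the asserted estimate. Uniqueness is immediate: the difference of two solutions belongs to $\ker\mathcal{L}(x,D)\cap\mathcal{D}(\mathcal{L}(x,D))=\{0\}$ by Assumption \ref{Assumption-1}.

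\textbf{Main obstacle.} The delicate part is the solvability step, namely turning ``$0$ is not an eigenvalue'' together with the discrete-spectrum property into a genuine two-sided bounded inverse gaining $2m$ derivatives up to the boundary. This rests on the Fredholm/compact-resolvent theory for the perturbed operator combined with elliptic boundary regularity for the Navier problem; the latter can itself be recovered from the iterated Poisson-problem estimates used above (bootstrapping the functions $(-\Delta)^j\tilde u$ and absorbing the lower-order terms by interpolation), but requires some care because the perturbations reach order $m$, which is nearly half the order of the principal part. The data lifting and the algebraic reduction are, by comparison, routine once the iterated-Laplacian structure is exploited; of course, the statement is also available directly from \cite{tanabe2017functional}.
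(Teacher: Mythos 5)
Your argument is essentially correct, but note that the paper does not prove this statement at all: it is quoted directly from the literature (Tanabe, Lemma 5.13), with the closedness and discreteness of the spectrum of $\mathcal{L}(x,D)$ on $\mathcal{D}(\mathcal{L}(x,D))$ taken from the cited work of Grubb. Your proposal supplies the standard self-contained proof behind such a citation: the cascade of Dirichlet problems $-\Delta w_{m-1}=0$, $-\Delta w_j=w_{j+1}$ exploits the Navier (iterated-Laplacian) structure to lift $f$ to $w\in H^{2m}(\Omega)$ with $\|w\|_{H^{2m}}\le C\|f\|$, and the reduction $u=w+\tilde u$ together with invertibility of $\mathcal{L}(x,D)$ at $0$ and a closed-graph argument gives existence, uniqueness and the estimate. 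The only point you gloss over is the step ``relatively compact perturbation $\Rightarrow$ discrete spectrum'': relative compactness of the order-$\le m$ terms with respect to $(-\Delta)^m$ by itself does not rule out that the spectrum is all of $\mathbb{C}$; one must first check that the resolvent set is nonempty, e.g.\ by noting that the perturbation has relative bound zero (interpolation of $\|A^{j}D^{j}u\|_{L^2}\le \epsilon\|(-\Delta)^m u\|_{L^2}+C_\epsilon\|u\|_{L^2}$ for $j\le m$) so that $\lambda$ far out on the negative real axis lies in the resolvent set, after which analytic Fredholm theory yields discreteness. Alternatively you can simply invoke the Grubb reference, as the paper does, since that fact is stated there before Assumption \ref{Assumption-1} is imposed. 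With that point either cited or filled in, your lifting-plus-resolvent route is a perfectly valid and arguably more transparent proof than an appeal to the general theory in \cite{tanabe2017functional}.
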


We define the Dirichlet-to-Neumann (D-N) map by
\[
\mathcal{N}: \prod_{i=0}^{m-1} H^{2m-2i-\frac{1}{2}}(\partial \Omega) \rightarrow \prod_{i=0}^{m-1} H^{2 m-2i-\frac{3}{2}}(\partial \Omega),
\]
\[
  \mathcal{N}(f)=\left(\partial_{\nu} u_f|_{\partial \Omega}, \cdots,\partial_{\nu}(-\Delta)^{k}u_f|_{\partial \Omega}, \cdots,\partial_{\nu}(-\Delta)^{m-1} u_f|_{\partial \Omega}\right),  \quad f=(f_0,\cdots,f_{m-1}); 
\]
where $u_f \in H^{2m}(\Omega)$ is the unique solution to the problem \eqref{Direct Problem} corresponding to $f \in \prod_{i=0}^{m-1} H^{2m-2i-\frac{1}{2}}(\partial \Omega)$ and $\nu$ is the outward unit normal vector to the boundary $\partial \Omega$.
Let $\Gamma$ be a non-empty open subset of the boundary $\partial \Omega$, we define the space 
\[
\mathcal{H}_\Gamma := \{ f \in \prod_{i=0}^{m-1} H^{2m-2i-\frac{1}{2}}(\partial\Omega) : \supp{f} \subset \Gamma\}.
\]
We define the partial D-N map as
\begin{align}\label{Partial_DN_Map}
    &\mathcal{N}_{\Gamma}: \mathcal{H}_{\Gamma} \rightarrow \prod_{i=0}^{m-1} H^{2 m-2i-\frac{3}{2}}(\Gamma),
\end{align}
\[\mbox{as }\quad \mathcal{N}_{\Gamma}(f):=\left(\partial_{\nu} u_f|_{\Gamma}, \partial_{\nu}(-\Delta)u_f|_{\Gamma}, \cdots,\partial_{\nu}(-\Delta)^{m-1} u_f|_{\Gamma}\right).\]

We prove unique recovery of the coefficients of \eqref{Operator} from the partial D-N map $\mathcal{N}_{\Gamma}$, assuming that the inaccessible part of the boundary $\partial \Omega\setminus\Gamma$ is a part of a hyperplane. 

\subsection{Statement of the main result}
Throughout the article, we follow the standard tensor notations used in \cite{Sharafutdinov_book}.
Given an integer $m\geq 0$, we write $T^{m}=T^{m}(\R^{n})$ to denote the complex vector space of $\R$-multilinear functions from $\underbrace{\R^{n}\times \cdots \times \R^{n}}_\text{m-times}$ to $\mathbb{C}$. Let $e_{1}, \cdots, e_{n}$ be a basis for $\R^{n}$. We write the components of the tensor $f \in T^{m}$ as $f_{i_{1}\cdots i_{m}} := f(e_{i_{1}},\cdots,e_{i_{m}})$.
The notation $S^{m}= S^{m}(\R^{n})$ denotes the subspace of $T^{m}$ that consists of functions symmetric in all their arguments over the real vector space $\R^{n}$.

For $f\in T^{m}$, $g\in T^{k}$, the tensor product $f\otimes g\in T^{m+k}$ is defined by 
\[
f\otimes g(x_{1}, \cdots, x_{m+k})=f(x_{1},\cdots, x_{m})g(x_{m+1},\cdots, x_{m+k}).
\]
Let $\sigma: T^{m}\to S^{m}$ be the symmetrization defined by
\[
\sigma f(x_{1}, \cdots, x_{m})=\frac{1}{m!}\sum_{\pi\in \Pi_{m}}f(x_{\pi(1)}, \cdots, x_{\pi(m)}),
\]
where $\Pi_{m}$ is the set of all permutations of the set $\{1,\cdots, m\}$.

Given $f\in T^{m}, g\in T^{k}$, the symmetrized tensor product $u\odot v$ defined by 
\[\begin{aligned}
(f\odot g)(x_{1}, \cdots x_{m+k})
:=&\sigma(f \otimes g)(x_{1}, \cdots x_{m+k}) \\
=&\frac{1}{(m+k)!}\sum_{\pi\in\Pi_{m+k}}f(x_{\pi(1)},\cdots, x_{\pi(m)})g(x_{i_{\pi(m+1)}}, \cdots, x_{i_{\pi(m+k)}}).
\end{aligned}\]
We define the operator $i_{\delta}: S^{m}\to S^{m+2}$ as follows:
\[(i_{\delta}f)_{i_{1}\cdots i_{m+2}}
:= (\delta \odot f)_{i_{1}\cdots i_{m+2}}
= \sigma(f_{i_{1}\cdots i_{m}}\otimes \delta_{i_{m+1}i_{m+2}}).
\]

\begin{definition}\label{Def_1}
We say a symmetric tensor $A^j \in S^j(\R^n)$, $j \geq 2$, is partial isotropic if there exists a symmetric $(j-2)$-tensor field  $\overline{A}^{j-2} \in S^{j-2}(\R^n)$ in $\Omega$ such that $A^{j}=i_{\delta} \overline{A}^{j-2}=\overline{A}^{j-2} \odot \delta$, where $\delta$ is the Kronecker delta tensor and $\odot$ denotes symmetrized product of tensors.  
\end{definition}
As an example, any $n$ dimensional partially isotropic $2$-tensors (matrix) are of the form $a(x)I_n$ where
$a(x)$ is a function ($0$-tensor) and $I_n$ is the $n \times n$ identity matrix.
We assume the leading order perturbation $A^{m}$ of the operator \eqref{Operator} is to be partially isotropic, which means $A^{m} = i_{\delta}\overline{A}^{m-2}$ for some $m-2$ order symmetric tensor field $\overline{A}^{m-2}$.
We now state the main result of this article.
\begin{theorem}\label{Main_Result} Let $\Omega \subset \left\{\mathbb{R}^{n};  x_{n}>0\right\}$ be an open bounded domain with smooth boundary, where $n\geq3$. Let $\Gamma_{0} = \partial\Omega \cap \left\{x_{n}=0\right\}$ be non-empty, open and let $\Gamma = \partial \Omega \setminus 
 \Gamma_{0} $. Let $A^{j},\widetilde{A}^{j} \in C_c^{\infty}
(\Omega) \text{ for } 1\leq j \leq m$  and $q,  \widetilde{q} \in L^{\infty}(\Omega)$. We consider a new operator $\widetilde{\mathcal{L}}(x, D)$ by replacing $A^j$ by $\tilde{A}^j$ and $q$ by $\tilde{q}$ 
in \eqref{Operator}. Let $\mathcal{N}_{\Gamma}$ and $\widetilde{\mathcal{N}}_{\Gamma}$ be the partial D-N map corresponding to the operators $\mathcal{L}(x, D)$ and $\widetilde{\mathcal{L}}(x, D)$ respectively.
Let the coefficients be such that Assumption \ref{Assumption-1} is satisfied for $\mathcal{L}(x, D)$ and $\widetilde{\mathcal{L}}(x, D)$. We further assume that for $j=m$, $A^{m}, \widetilde{A}^{m}$ are partially isotropic. If 
\begin{equation}
\mathcal{N}_{\Gamma}(f)= \widetilde{\mathcal{N}_{\Gamma}}(f), \quad \text { for all } f \in \prod_{i=0}^{m-1} H^{2 m-2i-\frac{1}{2}}(\partial \Omega) \text { with } \supp (f)\subset \Gamma,
\end{equation}
then for $1\leq j\leq m$, we have
\[A^{j} = \widetilde{A}^{j} \quad\text { and } \quad q=\widetilde{q} \quad \text { on }\overline{\Omega}.\]
\end{theorem}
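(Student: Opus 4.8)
The plan is to convert the equality of the partial D-N maps into an interior orthogonality identity, and then to test that identity against complex geometric optics (CGO) solutions whose large-parameter expansion separates the coefficients by their order.

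\emph{Step 1 (integral identity and reflection).} Fix Navier data $f$ with $\supp f\subset\Gamma$; let $u$ solve $\mathcal L(x,D)u=0$ with data $f$, let $u^{*}$ solve $\widetilde{\mathcal L}(x,D)u^{*}=0$ with the same data, and put $w=u-u^{*}$. Then $w$ has vanishing Navier data on $\partial\Omega$, vanishing Neumann data on $\Gamma$ (because $\mathcal N_{\Gamma}(f)=\widetilde{\mathcal N}_{\Gamma}(f)$), and $\widetilde{\mathcal L}w=(\widetilde{\mathcal L}-\mathcal L)u$. Pairing with any $v$ solving $\widetilde{\mathcal L}^{*}v=0$ and using the higher order Green's identity for the polyharmonic operator with Navier conditions gives, up to fixed signs,
\[
\int_{\Omega}\big((\widetilde{\mathcal L}-\mathcal L)u\big)\,\overline v\,\D x=\sum_{j=0}^{m-1}\;\pm\!\!\int_{\Gamma_{0}}\partial_{\nu}(-\Delta)^{j}w\;\overline{(-\Delta)^{m-1-j}v}\,\D S,
\]
since the lower order terms $A^{k}D^{k}$ contribute nothing on $\Gamma_{0}$ (their coefficients lie in $C_{c}^{\infty}(\Omega)$ and vanish near $\partial\Omega$), and the terms carrying $(-\Delta)^{j}w|_{\Gamma_{0}}$ drop out because the Navier data of $w$ vanishes. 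To annihilate the right-hand side I would use Isakov's reflection across $\{x_{n}=0\}$: let $\Omega^{*}$ be the reflected copy, $\widehat\Omega=\Omega\cup\Gamma_{0}\cup\Omega^{*}$, extend the coefficients by the natural tensor reflection (which preserves partial isotropy, since $\delta$ is reflection invariant), and take $u$ (resp.\ $v$) to be the odd part of a CGO solution of the reflection-symmetric extension $\widehat{\mathcal L}$ (resp.\ of the transpose $\widehat{\widetilde{\mathcal L}}^{*}$). Then $(-\Delta)^{j}v|_{\{x_{n}=0\}}=0$ for $0\le j\le m-1$, so the boundary sum vanishes, and the oddness of $u$ forces $\supp f\subset\Gamma$. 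Choosing $\rho_{1},\rho_{2}\in\mathbb{C}^{n}$ with $\rho_{j}\cdot\rho_{j}=0$, $|\rho_{j}|\sim\tau$, so that $\operatorname{Re}\rho_{1}$ and the frequency shift $\xi$ lie in the hyperplane while $\operatorname{Im}\rho_{1}$ has a component transverse to it of size $\tau$, the four exponential products produced by the two reflected solutions split into two coinciding ``diagonal'' terms, which assemble into a single integral over $\widehat\Omega$, and two ``cross'' terms whose phases oscillate at frequency $\sim\tau$ in $x_{n}$ and hence vanish as $\tau\to\infty$ by Riemann--Lebesgue. One is left with
\[
\int_{\widehat\Omega}\Big(\sum_{k=1}^{m}(\widetilde A^{k}-A^{k})_{i_{1}\cdots i_{k}}D^{k}_{i_{1}\cdots i_{k}}\widehat u_{\rho_{1}}+(\widetilde q-q)\,\widehat u_{\rho_{1}}\Big)\,\overline{\widehat v_{\rho_{2}}}\,\D x=o(1),
\]
the coefficient differences being extended evenly to $\widehat\Omega$, so that $\widetilde A^{m}-A^{m}$ stays partially isotropic.

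\emph{Step 2 (CGO solutions).} For a polyharmonic operator with tensorial perturbations one constructs $\widehat u_{\rho}=e^{x\cdot\rho}(a_{\rho}+r_{\rho})$ with $\rho\cdot\rho=0$, $|\rho|\sim\tau$, where $a_{\rho}$ solves the transport hierarchy produced by $(-\Delta)^{m}$ (at leading order $\rho\cdot\nabla a_{\rho}=0$) and $\|r_{\rho}\|_{L^{2}}=O(\tau^{-1})$; one keeps enough freedom in $a_{\rho}$ (multiplying holomorphic amplitudes by polynomial factors) to produce, in the limit, the momentum ray transforms of the coefficients, not merely their ordinary ray transforms. The decisive point is that for $A^{m}=i_{\delta}\overline{A}^{m-2}$ one has $(i_{\delta}\overline{A}^{m-2})_{i_{1}\cdots i_{m}}\rho^{i_{1}}\cdots\rho^{i_{m}}=c\,(\rho\cdot\rho)\,\overline{A}^{m-2}_{i_{1}\cdots i_{m-2}}\rho^{i_{1}}\cdots\rho^{i_{m-2}}=0$, and even the first sub-principal term of $A^{m}D^{m}(e^{x\cdot\rho}a_{\rho})$ vanishes once $\rho\cdot\nabla a_{\rho}=0$; hence $A^{m}$ enters the expansion only at order $\tau^{m-2}$, through a term proportional to $(\Delta a_{\rho})\,\langle\overline{\widetilde A}^{m-2}-\overline{A}^{m-2},\rho^{\otimes(m-2)}\rangle$.

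\emph{Step 3 (recovery cascade).} Substituting the CGO solutions into the identity of Step~1 and expanding in powers of $\tau$, one extracts the coefficients from the top order downward. At order $\tau^{m-1}$ only $\widetilde A^{m-1}-A^{m-1}$ survives, so all its (momentum) ray transforms vanish, whence $\widetilde A^{m-1}=A^{m-1}$ by injectivity of the momentum ray transform on symmetric tensor fields. At order $\tau^{m-2}$, after subtracting the now-known contribution of $A^{m-1}$, the remaining unknowns are $\widetilde A^{m-2}-A^{m-2}$ (carried by the factor $a_{\rho}$) and $A^{m}$, i.e.\ $\overline{\widetilde A}^{m-2}-\overline{A}^{m-2}$ (carried by $\Delta a_{\rho}$), which are separated by exploiting the freedom in $a_{\rho}$ and then inverted. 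Continuing downward one recovers $A^{m-3},\dots,A^{1}$ in turn, and finally at order $\tau^{0}$, all tensorial coefficients being already identified, $\widetilde q-q$ is isolated and its vanishing ray transform gives $\widetilde q=q$. Restricting the conclusions from $\widehat\Omega$ back to $\Omega$ yields $A^{j}=\widetilde A^{j}$ for $1\le j\le m$ and $q=\widetilde q$ on $\overline\Omega$.

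The step I expect to be the main obstacle is the middle of this cascade: organizing the $\tau$-expansion so that at each order $\tau^{m-k}$ the contributions of the already recovered higher order coefficients cancel cleanly, and in particular disentangling the principal contribution of $A^{m-k}$ from the sub-principal contribution of $A^{m}$; this is precisely why the partial-isotropy hypothesis on $A^{m}$ is imposed, since the leading symbol of a generic order-$m$ tensor restricted to the null cone $\{\rho\cdot\rho=0\}$ has kernel exactly $i_{\delta}(S^{m-2})$ and would otherwise leave an unremovable ambiguity. A secondary technical difficulty is checking that the family of (momentum) ray transforms actually obtained (contractions against null directions only, but with polynomial amplitude weights) is rich enough for the relevant injectivity theorems to apply.
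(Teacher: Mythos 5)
Your outline follows the same route as the paper (reflection across $\{x_n=0\}$ with the parity extension of the tensors, odd combinations $\mathcal U(x)-\mathcal U(x^*)$ of CGO solutions, Riemann--Lebesgue to kill the cross terms with $\operatorname{Re}\rho$ tangent and $\operatorname{Im}\rho$ transverse to the hyperplane, and then momentum ray transforms), but the recovery cascade in your Step 3 has a genuine gap. You claim that at order $\tau^{m-1}$ only $\widetilde A^{m-1}-A^{m-1}$ survives and is annihilated outright ``by injectivity of the momentum ray transform.'' The transforms you actually obtain are contractions with the complex null directions $(e_1+\mathrm{i}\eta)^{\otimes(m-1)}$ weighted by $y_2^k$, and for tensors of rank $\geq 2$ this data has the nontrivial kernel $i_{\delta}(S^{\cdot-2})$: Theorem \ref{Lemma_MRT} only yields $W^{m-1}=i_{\delta}V$ for an unknown $(m-3)$-tensor $V$ when $m\geq 3$, so no coefficient is fully determined at any single order of the expansion. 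Moreover, at that same order the partially isotropic top coefficient already contributes through $\overline W^{m-2}$ multiplied by $Ta_0$ (see \eqref{Int_id_h2}), so the order-$\tau^{m-1}$ identity mixes two unknowns. The paper's proof is organized precisely around this obstruction: it keeps only $T^{m}a_0=0$ (not $\rho\cdot\nabla a_0=0$), so that amplitudes $a_0=(z-\bar z)^{l}$ are admissible, and runs the induction \eqref{Induction step}--\eqref{final_induction} in which the $i_\delta$-ambiguities produced at each order are carried to the next order, separated by choosing $l$ so that $T^{j+1-r}a_0$ vanishes selectively, and the conclusion $W^{j}=0$ is reached only when the rank of the residual potential becomes negative. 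Your ansatz $\rho\cdot\nabla a_\rho=0$ at leading order forecloses exactly these amplitudes, and the issue you defer as a ``secondary technical difficulty'' (whether null-direction contractions with polynomial weights are rich enough) is in fact the heart of the argument, not a routine verification.

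A further, more repairable, problem is the remainder estimate: with only $\|r_\rho\|_{L^2}=O(\tau^{-1})$ the terms containing $r_\rho$ are of size comparable to every sub-leading order you wish to read off, so the cascade below the top order is contaminated. One needs the full expansion $a_0+h a_1+\cdots+h^{m-1}a_{m-1}+r$ with $\|r\|_{H^{2m}_{\mathrm{scl}}}=O(h^{m})$ as in Lemma \ref{Lemma} (together with the decay \eqref{Decay_remainder_terms}) to justify multiplying \eqref{Integral Identity Main} by $h^{m-j-1}$ and passing to the limit at each stage, down to the zeroth-order coefficient $q$.
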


In the last few decades, partial data inverse problems, in which the boundary data is available only on a part of the boundary, has attracted considerable attention, see; \cite{KS_Survey}. For dimensions $n\geq 3$, Bukhgeim and Uhlmann in \cite{bukhgeim2002recovering} established the first uniqueness result for the Schr\"odinger operator $(-\Delta + q)$ in a bounded domain for bounded potentials, assuming that the Neumann data measured on slightly more than the half of the boundary. In \cite{kenig2007calderon}, Kenig, Sj\"ostrand, and Uhlmann proved uniqueness for the Schr\"odinger operator under the assumption that the Dirichlet-to-Neumann map measured on an open subset of the boundary $\partial\Omega$ where the Dirichlet data is supported on a complementary part of the boundary. In \cite{ferreira2007determining, chung2014partial,Bh18}, authors investigated partial data inverse problems for the magnetic Schr\"odinger operator $(D+A)^2+q$ in a bounded simply-connected domain for some vector field $A$ and potential $q$. In \cite{krupchyk2012determining,krupchyk2014inverse} Krupchyk Lassas and Uhlmann introduced Calde\'on type inverse problems for biharmonic and polyharmonic operators. They were soon followed by \cite{Yang-Paper,ghosh2016determination,BG_19,BhattacharyyaGhosh20,bhattacharyya2021unique,SS_linearize_polyharmonic}, etc., where the authors have investigated uniqueness for inverse boundary value problems for biharmonic and polyharmonic operators from full and partial boundary data. So far, the question of recovering coefficients of a higher order elliptic operator from partial boundary data where a part of the boundary is inaccessible has been open in the literature.
In \cite{Yang-Paper}, authors considered domains with inaccessible part of the boundary probed with a biharmonic operator with just first and a zeroth order perturbation. On the other hand, in \cite{bhattacharyya2021unique}, the authors have considered polyharmonic operators with several lower order tensorial perturbations while the entire boundary is accessible.
In this article, we generalise the existing results by considering several tensorial perturbations as in \eqref{Operator} and an inaccessible part of the boundary $\Gamma_{0}=\partial \Omega\setminus\Gamma$, assuming that $\Gamma_0$ is part of a hyperplane.

In dimension $n=3$, Isakov \cite{IsakovPartialData07} introduced the reflection approach for the inverse boundary value problems for the Schr\"odinger equation and the conductivity equation for dealing with inverse problems in domains with inaccessible part of the boundary. 
Kenig and Salo \cite{KS_Survey} unified and improved the approaches of \cite{IsakovPartialData07} and \cite{kenig2007calderon} for the Schr\"odinger operator. In \cite{KLU12}, the authors considered the magnetic Schr\"odinger operator $(-\Delta)+A\cdot D+q$ and proved that the magnetic field $dA$ and potential $q$ can be uniquely recovered from the partial Dirichlet-to-Neumann map. In \cite{Yang-Paper}, the author proved recovery of first and zeroth order perturbations for inverse boundary value problems for the biharmonic operator on a domain from the partial Dirichlet-to-Neumann map. Some related works can also be found in \cite{caro2009inverse, Caro2011inverse, LiU10,liu2021determine}. 
To the best of the author's knowledge, this is the first injectivity result for polyharmonic operators with higher order tensorial perturbations, in a domain having an inaccessible part of the boundary.

The novelty of this article lies in recovering tensorial perturbations having a nontrivial inaccessible part of the boundary. We assume the inaccessible part of the boundary is part of a hyperplane.
Here we briefly sketch the idea of the proof.
In order to deal with the inaccessible part of the boundary, we first extend the domain and the operator \eqref{Operator} by considering a suitable reflection about the hyperplane where the inaccessible part is supported. For $m=1,2$, similar types of reflections have been considered in \cite{IsakovPartialData07,Yang-Paper,Leyter-localdata-17}. In the extended domain, we construct a class of Complex Geometric Optics type solutions using appropriate Carleman estimates. The reflection and the construction of solutions are done in such a way that the solution of the extended operator satisfies the original boundary value problem in $\Omega$ upon folding the domain about the inaccessible part of the boundary. Thus we obtain a class of solutions for the boundary value problem \eqref{Direct Problem}. Using these solutions, a careful analysis entails having a class of integral equations of the lower order tensorial perturbations. Upon further analysis of these solutions, we reduce our problem to an uniqueness question for a series of Momentum Ray Transforms (MRT) of the unknown tensorial perturbations. Recently in \cite{bhattacharyya2021unique,SS_linearize_polyharmonic} MRT has been proved to be useful for recovering higher order tensors from integral equations. We use uniqueness results of MRT from \cite{bhattacharyya2021unique, SS_linearize_polyharmonic} to finally obtain uniqueness of the tensorial perturbations and thus complete the proof of Theorem \ref{Main_Result}.

This paper is organized as follows. In Section \ref{Sec_Preliminaries}, we provide the necessary preliminary results for proving our main results. These include interior Carleman estimates, constructions of Complex Geometric Optics (CGO) solutions, and momentum ray transforms. 
In Section \ref{Sec_Proof_Thm}, we prove our main theorem by constructing special CGO solutions and deriving appropriate integral identities to recover the coefficients.

\section{Preliminary results}\label{Sec_Preliminaries}
In this section, we start by recollecting the construction of the Complex Geometric Optics (CGO) solutions for the operator \eqref{Operator} in a bounded domain. We use Carleman estimates in the construction. We refer to \cite{krupchyk2012determining,krupchyk2014inverse,ghosh2016determination,bhattacharyya2021unique, SS_linearize_polyharmonic} for a detailed description of the construction.

\subsection{Interior Carleman estimates}
%Firstly, we state a Carleman estimate, which is required to construct CGO solutions.
Following \cite{kenig2007calderon}, we start by recalling the definition of limiting Carleman weights for the semiclassical Laplacian $(-h^{2} \Delta)$, where $h>0$ is a small parameter. Let $\widetilde{\Omega}$ be an open set in $\mathbb{R}^{n}$ such that $\Omega \subset \subset \widetilde{\Omega}$ and $\varphi \in C^{\infty}(\widetilde{\Omega}, \mathbb{R})$ with $\nabla \varphi \neq 0$ in $\overline{\Omega}$. We consider the conjugated semiclassical Laplacian operator 
\begin{equation}\label{conjugated operator}
P_{0, \varphi} := e^{\frac{\varphi}{h}}\left(-h^{2} \Delta\right) e^{-\frac{\varphi}{h}}, \quad 0<h\ll 1,
\end{equation}
and its semiclassical principal symbol
$ {p}_{0, \varphi}(x, \xi) = |\xi|^{2} + 2\mathrm{i}\nabla \varphi\cdot \xi - |\nabla\varphi|^{2}$.
\begin{definition}[\cite{kenig2007calderon}]\label{LCW}
We say that $\varphi \in C^{\infty}(\widetilde{\Omega})$ is a limiting Carleman weight for $P_{0,\varphi}$ in $\Omega$, if $\nabla \varphi \neq 0$ in $\overline{\Omega}$ and $\operatorname{Re}\left(p_{0, \varphi}\right), \operatorname{Im}\left(p_{0, \varphi}\right)$ satisfies
\begin{equation*}
 \Big{\{}\mathrm{Re}(p_{0,\varphi}), \mathrm{Im}(p_{0,\varphi})\Big{\}}(x,\xi)=0 \mbox{ whenever } p_{0,\varphi}(x,\xi)=0 \mbox{ for } (x,\xi)\in \widetilde{\Omega}\times(\mathbb{R}^n\setminus\{0\}),
\end{equation*}
where $\{\cdot, \cdot \}$ denotes the Poisson bracket.
\end{definition}
Examples of such $\varphi$ are linear weights $\varphi(x)=\mu_{1}\cdot x$, where $0 \neq\mu_{1}\in \mathbb{R}^{n}$ or logarithmic weights $\varphi(x)=\log \left|x-x_{0}\right|$ with $x_{0} \notin \overline{\widetilde{\Omega}}.$

We shall first introduce semiclassical Sobolev spaces. For any $h>0$, be a small parameter and an open set $\Omega \subset\mathbb{R}^n$ and for any non-negative integers $m$, the semiclassical Sobolev space $H^m_{\mathrm{scl}}(\Omega)$ is the space $H^m(\Omega)$ endowed with the following semiclassical norm
\[
 \lVert u \rVert^2_{H^m_{\mathrm{scl}}(\Omega)} = \sum\limits_{|\alpha|\le m} \lVert ( hD)^{\alpha}\, u \rVert^2_{L^2(\Omega)}.
\]
For $\Omega =\R^{n}$ we can equivalently define the semiclassical Sobolev spaces $H_{scl}^{s}(\mathbb{R}^{n})$, $s\in \mathbb{R}$ as the space $H^{s}(\mathbb{R}^{n})$ endowed with the semiclassical norm 
\[
\|u\|_{H^s_{\mathrm{scl}}\left(\mathbb{R}^{n}\right)}^{2}=\left\|\langle h D\rangle^{s} u\right\|_{L^{2}\left(\mathbb{R}^{n}\right)}^{2}= \int_{\R^{n}}(1+h^{2}|\xi|^{2})^{s}|\hat{u}(\xi)|^{2}\mathrm{d}\xi,
\]
here $\langle \xi \rangle = (1+|\xi|^{2})^{\frac{1}{2}}$. For a detailed description of semiclassical analysis and Poisson bracket present here, see \cite{Martinez,Zworski}.
We recall the following Carleman estimate from \cite{bhattacharyya2021unique}.
\begin{proposition}\label{Carleman_Poly}
Let the coefficients $A^{j}$ belong to $C_{c}^{\infty}(\Omega; \mathbb{C}^{n^{j}})$ for $1 \leq j \leq m$ and $q \in L^{\infty}(\Omega; \mathbb{C})$. Let $\varphi(x)$ be a limiting Carleman weight for the conjugated semiclassical Laplacian. Then for $0<h \ll 1$ and $ -2m \leq s\leq 0$, we have
    \begin{equation*}
    \label{Carleman estimate}
h^{m}\|u\|_{H^{s+2m}_{\mathrm{scl}}(\Omega)} \leq C\left\|h^{2 m} e^{\frac{\varphi}{h}} \mathcal{L}(x, D) e^{-\frac{\varphi}{h}}u\right\|_{H^{s}_{\mathrm{scl}}(\Omega)}, \text { for all } u \in C^{\infty}_{c}(\Omega),
\end{equation*}
where the constant $C=C_{s, \Omega, A^{j}, q}$ is independent of $h$.
\end{proposition}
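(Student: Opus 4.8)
The plan is to bootstrap this estimate from the standard second-order semiclassical Carleman estimate for the conjugated Laplacian $P_{0,\varphi}$, and then to treat the lower-order tensorial perturbations as a perturbation of $(-\Delta)^m$.

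First I would recall the basic Carleman estimate of Kenig--Sj\"ostrand--Uhlmann type for a limiting Carleman weight $\varphi$: for $0<h\ll1$ and any fixed $\sigma$ in a bounded interval,
\[
h\,\|v\|_{H^{\sigma+2}_{\mathrm{scl}}(\R^n)} \le C\,\|P_{0,\varphi}v\|_{H^{\sigma}_{\mathrm{scl}}(\R^n)}, \qquad v\in C_c^\infty(\Omega).
\]
The case $\sigma=0$ is the classical $L^2$ estimate, which uses the limiting property of $\varphi$ through H\"ormander's convexification $\varphi_\epsilon=\varphi+\tfrac{h}{2\epsilon}\varphi^2$; the general $\sigma$ follows by conjugating with $\langle hD\rangle^\sigma$ and absorbing the lower-order commutator $[\langle hD\rangle^\sigma,P_{0,\varphi}]$. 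Since $h^{2m}e^{\varphi/h}(-\Delta)^m e^{-\varphi/h}=P_{0,\varphi}^m$ and $P_{0,\varphi}$ maps $C_c^\infty(\Omega)$ into itself, I would apply this estimate in turn to $P_{0,\varphi}^{m-1}u,\,P_{0,\varphi}^{m-2}u,\dots,u$, raising $\sigma$ by $2$ at each of the $m$ steps (so the index on the left becomes $s+2m$), to obtain
\[
h^m\,\|u\|_{H^{s+2m}_{\mathrm{scl}}(\Omega)} \le C\,\|P_{0,\varphi}^m u\|_{H^{s}_{\mathrm{scl}}(\Omega)} = C\,\big\|h^{2m}e^{\varphi/h}(-\Delta)^m e^{-\varphi/h}u\big\|_{H^{s}_{\mathrm{scl}}(\Omega)}.
\]

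Next I would insert the perturbations. From $e^{\varphi/h}(hD)e^{-\varphi/h}=hD+i\nabla\varphi$ it follows that $e^{\varphi/h}h^{2m}A^jD^je^{-\varphi/h}$ equals $h^{2m-j}$ times a semiclassical differential operator of order $j$ with $C_c^\infty(\Omega)$ coefficients (polynomial in $A^j$ and derivatives of $\varphi$), hence bounded $H^{s+j}_{\mathrm{scl}}\to H^{s}_{\mathrm{scl}}$ uniformly in $h$; likewise the zeroth-order term is $h^{2m}$ times multiplication by $q\in L^\infty$, which acts boundedly on $H^s_{\mathrm{scl}}$ precisely because $s\le0$ (this, together with the requirement $s+2m\ge0$, is where the range $-2m\le s\le0$ enters). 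Therefore
\[
\big\|P_{0,\varphi}^m u-h^{2m}e^{\varphi/h}\mathcal{L}e^{-\varphi/h}u\big\|_{H^{s}_{\mathrm{scl}}} \le C\Big(\sum_{j=1}^{m}h^{2m-j}+h^{2m}\Big)\|u\|_{H^{s+2m}_{\mathrm{scl}}}.
\]
For $j\le m-1$ each summand carries a positive power of $h$ and is absorbed into the left-hand side of the iterated estimate once $h$ is small. The one problematic term is the top-order one, $A^mD^m$, whose contribution is $O(h^m)\,\|u\|_{H^{s+2m}_{\mathrm{scl}}}$ — the same order as the left-hand side — and I expect this to be the main obstacle.

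To overcome it I would redo the whole argument with the convexified weight $\varphi_\epsilon$ in place of $\varphi$: the convexity inequality gains a factor $\sqrt\epsilon$ on the right of the basic estimate, so iteration gives $\|u\|_{H^{s+2m}_{\mathrm{scl}}}\le C^m\epsilon^{m/2}h^{-m}\|P_{0,\varphi_\epsilon}^m u\|_{H^{s}_{\mathrm{scl}}}$, and the constant multiplying the $A^m$-term becomes $C^m\epsilon^{m/2}C_{A^m}$, which is $<\tfrac12$ as soon as $\epsilon$ is fixed small depending only on $m,\varphi,\Omega$ and a $C^k$-norm of $A^m$. This proves the estimate with $\varphi_\epsilon$; to return to $\varphi$ one conjugates by the smooth bounded nonvanishing multiplier $e^{\varphi^2/2\epsilon}$ (note $e^{\varphi_\epsilon/h}=e^{\varphi^2/2\epsilon}e^{\varphi/h}$), picking up only $O(h)$ commutator errors that are again absorbed for $h$ small, and relabels the test function. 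The reduction to the free polyharmonic estimate and the treatment of the genuinely lower-order terms are routine semiclassical bookkeeping; the only point requiring real care is the top-order perturbation, which is exactly what forces the convexification refinement.
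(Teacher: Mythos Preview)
The paper does not supply its own proof of this proposition; it is quoted verbatim from \cite{bhattacharyya2021unique}. Your outline is exactly the standard argument found in that reference and in the earlier polyharmonic papers (iterate the Kenig--Sj\"ostrand--Uhlmann second-order estimate $m$ times, then absorb the conjugated perturbations), so in substance you are reproducing the cited proof rather than diverging from it.

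One detail in your sketch is stated too loosely: the mechanism of the $\sqrt\epsilon$ gain. For the convexified weight $\varphi_\epsilon=\varphi+\tfrac{h}{2\epsilon}\varphi^2$ the single-step estimate has the form
\[
\tfrac{h}{\sqrt\epsilon}\,\|v\|_{H^{\sigma}_{\mathrm{scl}}}+h\,\|v\|_{H^{\sigma+2}_{\mathrm{scl}}}\;\le\;C\,\|P_{0,\varphi_\epsilon}v\|_{H^{\sigma}_{\mathrm{scl}}},
\]
with the improvement sitting only on the lower-order norm, not on the full $H^{\sigma+2}_{\mathrm{scl}}$ term. Hence straight iteration does \emph{not} give $h^m\|u\|_{H^{s+2m}_{\mathrm{scl}}}\le C\epsilon^{m/2}\|P_{0,\varphi_\epsilon}^m u\|_{H^{s}_{\mathrm{scl}}}$ as you wrote. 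What one actually uses is that a single insertion of the improved bound along the chain yields
\[
h^m\|u\|_{H^{s+2m-2}_{\mathrm{scl}}}\le C\sqrt\epsilon\,\|P_{0,\varphi_\epsilon}^m u\|_{H^{s}_{\mathrm{scl}}},
\]
and since the $A^m$-contribution is controlled by $h^m\|u\|_{H^{s+m}_{\mathrm{scl}}}\le h^m\|u\|_{H^{s+2m-2}_{\mathrm{scl}}}$ (here $m\ge2$), this single factor of $\sqrt\epsilon$ is already enough to close the absorption for a small fixed $\epsilon$. After that the passage from $\varphi_\epsilon$ back to $\varphi$ via the bounded multiplier $e^{\varphi^2/2\epsilon}$ goes through exactly as you described.
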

Let us denote
\[
\mathcal{L}_{\varphi}(x, D)=h^{2 m} e^{\frac{\varphi}{h}} \mathcal{L}(x, D) e^{-\frac{\varphi}{h}}.
\]
Let $\mathcal{L}_{\varphi}^{*}(x, D)$ be the formal $L^2$ adjoint of $\mathcal{L}_{\varphi}(x, D)$ given by $(\mathcal{L}_{\varphi}u, v)_{L^{2}(\Omega)} = (u, \mathcal{L}_{\varphi}^{*}v)_{L^{2}(\Omega)}$ for all $u, v \in C^{\infty}_{c}(\Omega)$.
It follows from a straightforward computation using integration by parts  that $\mathcal{L}^{*}(x, D)$ has a similar form as 
$\mathcal{L}(x, D)$ with the same regularity of the coefficients. Since $-\varphi$ is also a limiting Carleman 
weight if $\varphi$ is, the Carleman estimate derived in Proposition \ref{Carleman_Poly} holds true for $\mathcal{L}_{\varphi}^{*}(x, D)$ as well. We convert the Carleman estimate for $\mathcal{L}_{\varphi}^{*}(x, D)$ into a solvability result for $ \mathcal{L}_{\varphi}(x, D)$.
\begin{proposition}\label{Soblvability_result}
Let the coefficients $A^{j}$ for $1 \leq j \leq m$ belong to $C_{c}^{\infty}(\Omega, 
\mathbb{C}^{n^{j}})$ and $q\in L^{\infty}(\Omega, \mathbb{C})$, and let $\varphi(x)$ be a limiting Carleman weight the conjugated semiclassical Laplacian. Then for any $v\in L^{2}(\Omega)$ and small enough $h>0$ there exists $u \in H^{2m}(\Omega)$ such that 
\begin{equation}
    \mathcal{L}_{\varphi}(x, D) u=v \text { in } \Omega,
\end{equation}
satisfying, 
\[
h^{m}\|u\|_{H_{\mathrm{scl}}^{2m}} \leq C\|v\|_{L^{2}(\Omega)},
\]
where $C>0$ is independent of $h$ and depends only on $A^{j}, \text{ for } 1\leq j \leq m,$ and $q$.
\end{proposition}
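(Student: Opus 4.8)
The plan is to derive this solvability result from the Carleman estimate of Proposition~\ref{Carleman_Poly} by the standard Hahn--Banach duality argument; the only real work is the bookkeeping with semiclassical norms. As explained just above the statement, the estimate of Proposition~\ref{Carleman_Poly} holds verbatim for the conjugated adjoint $\mathcal{L}_\varphi^*(x,D)$ (because $\mathcal{L}^*(x,D)$ has the same form as $\mathcal{L}(x,D)$ with coefficients of the same regularity, $-\varphi$ is again a limiting Carleman weight, and an integration by parts gives $\mathcal{L}_\varphi^*(x,D)=h^{2m}e^{-\varphi/h}\mathcal{L}^*(x,D)e^{\varphi/h}$, i.e.\ $\mathcal{L}_\varphi^*$ is the conjugation of $\mathcal{L}^*$ by the weight $-\varphi$). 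Specializing this estimate to the admissible exponent $s=-2m$ gives
\begin{equation*}
h^{m}\|w\|_{L^{2}(\Omega)}\le C\big\|\mathcal{L}_\varphi^*(x,D)w\big\|_{H^{-2m}_{\mathrm{scl}}(\Omega)}\qquad\text{for all }w\in C^\infty_c(\Omega),
\end{equation*}
with $C$ independent of $h$.

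Next I would run the duality argument. On the subspace $\mathcal{S}:=\{\mathcal{L}_\varphi^*(x,D)w:w\in C^\infty_c(\Omega)\}$ of $H^{-2m}_{\mathrm{scl}}(\Omega)$ I define the linear functional $L(\mathcal{L}_\varphi^*(x,D)w):=(v,w)_{L^{2}(\Omega)}$. The estimate above shows $w\mapsto\mathcal{L}_\varphi^*(x,D)w$ is injective on $C^\infty_c(\Omega)$, so $L$ is well defined, and
\[
|L(\mathcal{L}_\varphi^*(x,D)w)|\le\|v\|_{L^{2}(\Omega)}\|w\|_{L^{2}(\Omega)}\le\frac{C}{h^{m}}\|v\|_{L^{2}(\Omega)}\,\big\|\mathcal{L}_\varphi^*(x,D)w\big\|_{H^{-2m}_{\mathrm{scl}}(\Omega)},
\]
so $L$ is bounded on $\mathcal{S}$ with norm at most $(C/h^{m})\|v\|_{L^{2}(\Omega)}$. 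Extending $L$ to all of $H^{-2m}_{\mathrm{scl}}(\Omega)$ by Hahn--Banach and identifying the dual of $H^{-2m}_{\mathrm{scl}}(\Omega)$ with $H^{2m}_{\mathrm{scl}}(\Omega)$ (equipped with the matching, $h$-dependent semiclassical norm), I obtain $u\in H^{2m}(\Omega)$ with $h^{m}\|u\|_{H^{2m}_{\mathrm{scl}}(\Omega)}\le C\|v\|_{L^{2}(\Omega)}$ and $(u,\mathcal{L}_\varphi^*(x,D)w)_{L^{2}(\Omega)}=(v,w)_{L^{2}(\Omega)}$ for all $w\in C^\infty_c(\Omega)$, which is precisely $\mathcal{L}_\varphi(x,D)u=v$ in the sense of distributions in $\Omega$.

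There is no deep difficulty here; the care goes into the semiclassical bookkeeping in the second step — checking that the Hahn--Banach pairing, restricted to $L^{2}$ functions, reduces to the ordinary $L^{2}$ inner product (so that the weak equation genuinely holds and one recovers $u$ in the stated space rather than merely in a dual), and verifying that the constant in the identification $\big(H^{-2m}_{\mathrm{scl}}(\Omega)\big)'\cong H^{2m}_{\mathrm{scl}}(\Omega)$ is uniform in $h$. An alternative, if one prefers, is to set the duality up directly in $L^{2}$, obtain a weak solution $u\in L^{2}(\Omega)$ first, and then upgrade to $u\in H^{2m}(\Omega)$ by interior elliptic regularity: the operator $e^{\varphi/h}(h^{2m}\mathcal{L})e^{-\varphi/h}$ is classically elliptic of order $2m$ since its principal symbol $h^{2m}|\xi|^{2m}$ is unaffected by the conjugation and by the lower-order perturbations, and one propagates the semiclassical weight through the regularity estimate. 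Everything else is routine.
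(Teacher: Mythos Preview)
Your proposal is correct and is precisely the standard Hahn--Banach/Riesz duality argument the paper invokes (it refers to \cite{ferreira2007determining,krupchyk2012determining,krupchyk2014inverse} without writing out the details). Your choice $s=-2m$ in the adjoint Carleman estimate and the subsequent duality between $H^{-2m}_{\mathrm{scl}}(\Omega)$ and $H^{2m}_{\mathrm{scl}}(\Omega)$ is exactly what is intended.
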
 
The proof of the above Proposition follows from Hahn--Banach extension theorem and Riesz representation theorem (see \cite{ferreira2007determining,krupchyk2012determining,krupchyk2014inverse}).

\subsection{Construction of Complex Geometric Optics solutions}\label{CGOs_ON_BOUNDED_DOAMIN}
We construct complex geometric optics solutions of the equation $\mathcal{L}(x, D)u = 0$ in $\Omega$, based on Proposition \ref{Soblvability_result}. Following \cite{ghosh2016determination,bhattacharyya2021unique}, we propose the solution to be of the form:
\begin{equation}\label{Propose CGO}
u(x)=u(x ; h) =e^{\frac{\varphi+\mathrm{i} \psi}{h}}\left(a_{0}(x)+h a_{1}(x)+h^{2} a_{2}(x)\cdots +h^{m-1}a_{m-1}(x)+r(x ; h)\right):= e^{\frac{\varphi+\mathrm{i}\psi}{h}}(\mathrm{a}(x;h)),
\end{equation}
where $h>0$ is a small parameter, $\varphi(x)$ is a limiting Carleman weight for the semiclassical Laplacian, the real-valued phase function $\psi$ is chosen such that $\psi$ is smooth near $\overline{\Omega}$ and solves the Eikonal equation $p_{0, \varphi}(x, \nabla \psi)=0$ in $\widetilde{\Omega}$.
The functions $\{a_{j}(x)\}$, for  $0\leq j\leq m-1$, are the complex amplitudes solving certain transport equations and $r(x;h)$ is the correction term which is determined later. We consider $\varphi$ and $\psi$ of the form 
\begin{equation}\label{Carleman_Weight}
\varphi(x)= \mu^{(2)}\cdot x \quad \text{ and }\quad \psi(x) = \left(\frac{h\xi}{2}+\sqrt{1-h^2\frac{|\xi|^2}{4}}\mu^{(1)}\right)\cdot x,
\end{equation}
where $\xi,\mu^{(1)},\mu^{(2)}\in\R^n$ be such that $|\mu^{(1)}|=|\mu^{(2)}|=1$ and $\mu^{(1)}\cdot\mu^{(2)}=\mu^{(1)}\cdot\xi=\mu^{(2)}\cdot\xi=0$.
One can easily check that $\varphi$ and $\psi$ solves the Eikonal equation $p_{0,\varphi}(x, \nabla \psi)$ in $\widetilde{\Omega}$, that is $|\nabla \varphi|=|\nabla \psi|$ and $\nabla \varphi \cdot \nabla \psi=0$.
We prove that there exist $h_0>0$ such that $u(x)$ is a solution of \eqref{Operator} for each fixed $h\in(0,h_0)$. We now prove the following lemma.
    \begin{lemma}\label{Lemma}
        Consider the equation
        \begin{equation}\label{Poly_Eqn}
            \mathcal{L}(x, D)u= \Big\{(-\Delta)^m + \sum_{j=0}^{m-1} \sum_{i_{1}, \cdots, i_{m-j}=1}^{n} A_{i_{1} \cdots i_{m-j}}^{m-j}(x) D_{i_{1} \cdots  
 i_{m-j}}^{m-j}+q(x)\Big\}u=0,
        \end{equation}
        where $A^{j}\in C_{c}^{\infty}(\Omega, \mathbb{C}^{n^{j}}) \text{ for } 1 \leq j \leq m$ and $q(x) \in L^{\infty}(\Omega, \mathbb{C})$. We assume that for $j=m,$ $A^{j}$ is partially isotropic. Then for $h>0$ small enough, there exists a solution $u\in H^{2m}(\Omega)$ of \eqref{Poly_Eqn} of the form
        \begin{equation}
\begin{aligned}
u(x ; h) = &e^{\frac{\varphi+\mathrm{i} \psi}{h}}\left(a_{0}(x)+h a_{1}(x)+h^{2} a_{2}(x)\dots+ h^{m-1}a_{m-1}(x)+r(x ; h)\right):= e^{\frac{\varphi+\mathrm{i}\psi}{h}}(\mathrm{a}(x;h)), 
\end{aligned}
\end{equation}
where  $\varphi$ and $\psi$ are defined as in equation \eqref{Carleman_Weight}, and the functions $a_{0}, \cdots , a_{m-1}$ satisfy the transport equations \eqref{Transport for a_{0}} and \eqref{System of transport}, and $r$ satisfies the estimate
\[
\|r(x;h)\|_{H_{\mathrm{scl}}^{2 m}(\Omega)} \leq C h^{m},
\] where $C$ is independent of $h$.
    \end{lemma}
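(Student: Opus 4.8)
The plan is to substitute the WKB ansatz \eqref{Propose CGO} into the equation $\mathcal L(x,D)u=0$ and organize the resulting expression in powers of $h$. Conjugating by $e^{(\varphi+\mathrm i\psi)/h}$, the leading operator $(-\Delta)^m$ produces the factor $p_{0,\varphi+\mathrm i\psi}(x,\nabla(\varphi+\mathrm i\psi)) = p_{0,\varphi}(x,\nabla\psi) = 0$ by the Eikonal equation \eqref{Carleman_Weight}, so the $h^0$ term coming from the top-order part vanishes identically. The next several orders of $h$ will involve first-order (transport) operators acting on the amplitudes; because the leading operator is $(-\Delta)^m$ rather than $-\Delta$, one has to push the expansion to order $h^{m}$, and this is precisely why $m$ amplitudes $a_0,\dots,a_{m-1}$ are needed. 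I would collect the coefficient of each power $h^k$, $k=0,1,\dots,m-1$, set it to zero, and read off a triangular system of transport equations: the $h^0$-balance (after the Eikonal cancellation, the first surviving term) gives a complex transport equation for $a_0$, namely \eqref{Transport for a_{0}}, of the form $(\nabla(\varphi+\mathrm i\psi)\cdot\nabla + c(x))a_0 = 0$ with $c$ built from $\Delta(\varphi+\mathrm i\psi)$ and, crucially, from the leading perturbation $A^m$; here the partial-isotropy hypothesis $A^m = i_\delta\overline A^{m-2}$ is used so that the contribution of $A^m D^m$ to the principal amplitude equation reduces to a scalar multiplier $\overline A^{m-2}(\nabla(\varphi+\mathrm i\psi))$ times $a_0$, keeping the transport equation a genuine scalar ODE along the (complex) characteristics rather than an overdetermined system. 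The subsequent balances $h^1,\dots,h^{m-1}$ give inhomogeneous transport equations \eqref{System of transport} for $a_1,\dots,a_{m-1}$, each of the same type with source terms depending on the previously determined amplitudes and on the lower-order tensors $A^1,\dots,A^{m-1},q$.

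Next I would actually solve these transport equations. Since $\varphi = \mu^{(2)}\cdot x$ and $\psi$ is (to leading order in $h$) $\mu^{(1)}\cdot x$ with $\mu^{(1)}\perp\mu^{(2)}$, the vector field $\nabla(\varphi+\mathrm i\psi)$ is a constant complex vector $\mu^{(2)}+\mathrm i\mu^{(1)}$ (up to the $O(h)$ correction absorbable into the analysis), so the transport equations become $\bar\partial$-type or constant-coefficient $\overline\partial$ equations in two real variables (the span of $\mu^{(1)},\mu^{(2)}$), with the remaining $n-2$ variables as parameters. Standard elliptic solvability (e.g. the Cauchy transform / Cauchy–Pompeiu formula) yields smooth solutions $a_0,\dots,a_{m-1}$ on a neighbourhood of $\overline\Omega$, with the freedom to choose $a_0$ so that it does not vanish — this nonvanishing is what makes the amplitudes useful later for the integral identities, though for the present lemma we only need existence and smoothness with bounds uniform in $h$.

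Finally, with the amplitudes fixed, I would plug $u = e^{(\varphi+\mathrm i\psi)/h}(a+r)$ into $\mathcal L(x,D)u=0$ and isolate the equation for the correction term: $\mathcal L_\varphi(x,D)\big(e^{\mathrm i\psi/h}r\big) = -\mathcal L_\varphi(x,D)\big(e^{\mathrm i\psi/h}a\big)$, and by construction of the $a_j$ the right-hand side is $O(h^{2m})$ in $L^2(\Omega)$ (all powers $h^0,\dots,h^{m-1}$ having been cancelled, and $h^2$ being the natural gain per extra order coming from $(-h^2\Delta)$; one checks the precise counting $m + m = 2m$). Then I would invoke the solvability result, Proposition \ref{Soblvability_result}, applied to $\mathcal L_\varphi$ with a conjugation by $e^{\mathrm i\psi/h}$ absorbed (noting $\psi$ is real so this conjugation is bounded on $L^2$ uniformly in $h$, and on semiclassical Sobolev spaces it costs only controlled powers of $h$), to obtain $r$ with $h^m\|r\|_{H^{2m}_{\mathrm{scl}}(\Omega)} \le C\|{\text{RHS}}\|_{L^2} \le Ch^{2m}$, i.e. $\|r\|_{H^{2m}_{\mathrm{scl}}(\Omega)}\le Ch^m$. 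The main obstacle I anticipate is bookkeeping: carefully expanding $(-\Delta)^m$ conjugated by the exponential weight and tracking which terms land at which power of $h$, and in particular verifying that the partial-isotropy assumption on $A^m$ is exactly what is needed to keep the $h^0$-order equation for $a_0$ a solvable scalar transport equation; the commutator/conjugation estimates needed to move $e^{\mathrm i\psi/h}$ past $\mathcal L_\varphi$ and still apply Proposition \ref{Soblvability_result} are routine but must be done with uniform-in-$h$ constants.
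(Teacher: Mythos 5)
Your overall skeleton matches the paper's proof: conjugate by $e^{(\varphi+\mathrm{i}\psi)/h}$, expand in powers of $h$, determine $a_0,\dots,a_{m-1}$ from a triangular hierarchy of amplitude equations, and then solve for the correction term via Proposition \ref{Soblvability_result}, with the counting $h^{m}\|r\|_{H^{2m}_{\mathrm{scl}}(\Omega)}\leq C\,h^{2m}$ giving the stated bound (your remark about absorbing the $e^{\mathrm{i}\psi/h}$ factor is also how the paper treats it). However, your identification of the amplitude equations, and of the role of partial isotropy, is incorrect. The coefficient of $h^{-m}$ in $e^{-(\varphi+\mathrm{i}\psi)/h}\mathcal{L}(x,D)e^{(\varphi+\mathrm{i}\psi)/h}a_0$ equals $(-1)^{m}T^{m}a_0$ plus a constant multiple of $\langle A^{m},(\nabla(\varphi+\mathrm{i}\psi))^{\otimes m}\rangle\,a_0$, where $T=2\nabla(\varphi+\mathrm{i}\psi)\cdot\nabla+\Delta(\varphi+\mathrm{i}\psi)$. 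Since $A^{m}=i_{\delta}\overline{A}^{m-2}$ and $\nabla(\varphi+\mathrm{i}\psi)\cdot\nabla(\varphi+\mathrm{i}\psi)=0$ by the eikonal equation, that second contribution vanishes identically: $A^{m}$ does not appear in the $a_0$ equation at all, and it first enters at order $h^{-m+1}$ through the factorization $A^{m}D^{m}=\overline{A}^{m-2}D^{m-2}(-\Delta)$, which conjugates to $\overline{A}^{m-2}\mathcal{I}^{m-2}\bigl(\tfrac{1}{h}T+\Delta\bigr)$ as in \eqref{Conjugated operator}. So the leading balance is the $m$-th order equation $T^{m}a_0=0$ (this is exactly \eqref{Transport for a_{0}}), and the subsequent balances are $T^{m}a_j=-\sum_{k=1}^{j}M_ka_{j-k}$ with $M_k$ of order $m+k$, i.e.\ $\partial_{\bar z}^{m}$-type equations, not first-order transport equations. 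Your proposed first-order equation $(\nabla(\varphi+\mathrm{i}\psi)\cdot\nabla+c)a_0=0$ with $c$ built from $\overline{A}^{m-2}(\nabla(\varphi+\mathrm{i}\psi))$ is therefore not the correct balance: an $a_0$ solving it would in general have $T^{m}a_0\neq0$, the $h^{-m}$ term would survive, the normalized right-hand side would only be $O(h^{m})$ instead of $O(h^{2m})$, and the estimate $\|r\|_{H^{2m}_{\mathrm{scl}}(\Omega)}\leq Ch^{m}$ would fail.

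Two further points. Your explanation of why partial isotropy is needed (``to keep the transport equation a scalar ODE rather than an overdetermined system'') is off: even without it the leading balance is a single scalar equation; its actual effect here is to eliminate the $A^{m}$ contribution from the leading balance altogether (its deeper purpose is in the recovery argument for Theorem \ref{Main_Result}). Also, while for bare existence one could impose the stronger first-order condition $Ta_0=0$ (which does imply $T^{m}a_0=0$), the lemma asserts that the amplitudes satisfy \eqref{Transport for a_{0}} and \eqref{System of transport}, and the later steps of the paper exploit the full solution space of $\partial_{\bar z}^{m}a_0=0$ (choices such as $a_0=(z-\bar z)^{l}$), so the $m$-th order structure of these equations is essential and must be stated and solved as such.
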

        
    \begin{proof}
        Consider the conjugated operator 
        \begin{align}\notag\label{Conjugated operator}
e^{-\frac{\varphi+i \psi}{h}} \mathcal{L}(x, D) e^{\frac{\varphi+i \psi}{h}} \mathcal{A}(x ; h)=&\left(-\frac{1}{h} T-\Delta\right)^{m} \mathcal{A}(x ; h)
-\sum_{i=1}^{n} \overline{A}_{i_{1}\ldots i_{m-2} }^{m-2}\mathcal{I}_{i_{1} \ldots i_{m-2}}^{m-2}\left(\frac{1}{h} T+\Delta \right) \mathcal{A}(x ; h)\\
&+\sum_{j=1}^{m-1} \sum_{i_{1}, \cdots, i_{j}=1}^{n} A_{i_{1} \ldots i_{j}}^{j} \mathcal{I}_{i_{1} \ldots i_{j}}^{j}\mathcal{A}(x ; h)+q(x) \mathcal{A}(x ; h),
\end{align}
where 
\[
 \mathcal{I}_{i_{1} \ldots i_{j}}^{j}=e^{-\frac{\varphi+\mathrm{i} \psi}{h}} D_{i_{1} \ldots i_{j}}^{j} e^{\frac{\varphi+\mathrm{i} \psi}{h}}=\prod_{k=1}^{j}\left(\frac{1}{h} D_{i_{k}}(\varphi+\mathrm{i} \psi)+D_{i_{k}}\right),
\] 
\[
T = 2 \nabla(\varphi+\mathrm{i} \psi) \cdot \nabla + \Delta(\varphi+\mathrm{i} \psi)\quad \text { and } \quad \mathcal{A}(x ; h):= \sum_{j=0}^{m-1}h^{j}a_{j}(x).
\]
Now substituting \eqref{Propose CGO} in \eqref{Poly_Eqn}, we get the following transport equations for the amplitudes $a_{j}(x)$ with $0\leq j \leq m-1$ by equating the coefficient of $h^{-m+j}$ to zero for $0\leq j \leq m-1$ as
\begin{align}\label{Transport for a_{0}}
        T^{m} a_{0}(x)&=0, \quad \text { in } \quad \Omega.
        \end{align}
        \begin{align}\label{System of transport}
      T^{m} a_{j}(x)=-\sum_{k=1}^{j} M_{k} a_{j-k}(x), \quad \text { in } \quad \Omega, \quad 1 \leq j \leq m-1 .
    \end{align}

From \eqref{Conjugated operator}, we can calculate the differential operators $M_{j}$ of order $m+j$ where $1\leq j \leq m-1$. It is well-known that the solutions of \eqref{System of transport} are smooth; see \cite{ferreira2007determining}. We discuss an explicit form for the smooth solution $a_{0}(x)$ solving \eqref{Transport for a_{0}} in the next section.  Using $a_{0}(x), \cdots, a_{m-1}(x) \in C^{\infty}(\Omega)$ satisfying \eqref{System of transport}, \eqref{Transport for a_{0}}, we see that 
$$
e^{-\frac{\varphi+\mathrm{i} \psi}{h}} \mathcal{L}(x, D) e^{\frac{\varphi+\mathrm{i} \psi}{h}} \mathcal{A}(x ; h)\simeq \mathcal{O}(1).
$$
Now if $u(x ; h)$ as in \eqref{Propose CGO} is a solution of $\mathcal{L}(x, D) u(x ; h)=0$ in $\Omega$, we see that
\begin{equation}
0=e^{-\frac{\varphi+\mathrm{i} \psi}{h}} \mathcal{L}(x, D) u=e^{-\frac{\varphi+\mathrm{i} \psi}{h}} \mathcal{L}(x ; D) e^{\frac{\varphi+\mathrm{i} \psi}{h}}(\mathcal{A}(x ; h)+r(x ; h)) .
\end{equation}
This implies
$$
e^{-\frac{\varphi+\mathrm{i} \psi}{h}} \mathcal{L}(x, D) e^{\frac{\varphi+\mathrm{i} \psi}{h}} r(x ; h)=F(x ; h), \quad \text { for some } F(x ; h) \in L^{2}(\Omega), \quad \text { for all } h>0 \text { small. }
$$
By our choices, $a_{j}(x)$ annihilates all the terms of order $h^{-m+j}$ in $e^{-\frac{\varphi+\mathrm{i} \psi}{h}} \mathcal{L}(x ; D) e^{\frac{\varphi+\mathrm{i} \psi}{h}} \mathcal{A}(x ; h)$ in $\Omega$ for $j=0, \ldots, m-1$. Thus we get $\|F(x, h)\|_{L^{2}(\Omega)} \leq C$, where $C>0$ is uniform in $h$ for $h \ll 1$.
Using Proposition \ref{Soblvability_result} we have the existence of $r(x ; h) \in H^{2 m}(\Omega)$ solving
$$
e^{-\frac{\varphi+\mathrm{i} \psi}{h}} \mathcal{L}(x, D) e^{\frac{\varphi+\mathrm{i} \psi}{h}} r(x ; h)=F(x ; h),
$$
with the estimate
$$
\|r(x ; h)\|_{H_{\mathrm{scl}}^{2 m}(\Omega)} \leq C h^{m}, \quad \text { for } h>0 \text { small enough. }
$$
\end{proof}
Similarly, we can construct the CGO solution of the adjoint equation $\widetilde{\mathcal{L}}^{*}(x, D)v(x;h) = 0$ in $\Omega$.

\subsection{Momentum ray transforms}\label{Subsec_MRT}
    Here we briefly introduce the ray transform and Momentum Ray Transforms (MRT) of symmetric tensor fields and describe some injectivity results of these transforms. MRT was introduced in \cite{ Sharafutdinov_book} and later studied by \cite{Krishnan2018,abhishek2019support,Krishnan2019a,  SumanSIAM,agrawal2022unique,Mishra_Suman_2023,bhattacharyya2021unique,ilmavirta2023unique,jathar2024normal}. %We use MRT to prove Theorem \ref{Main_Result}. 

Recall that $S^{m}:=S^{m}(\mathbb{R}^{n})$ is the space of symmetric $m$-tensor fields and $C^{\infty}(\R^{n};S^{m})$ denotes the space of symmetric $m$-tensor fields in $\R^{n}$ whose components are smooth.
We write $C_{c}^{\infty}(\R^{n};S^{m})$ to denote tensors in $C^{\infty}(\R^{n};S^{m})$ whose components are compactly supported. Throughout the section, we assume the Einstein summation convention for repeated indices.
\begin{definition}[\cite{Sharafutdinov_book}, Chapter 2]\label{Ray_trans}
The ray transform is the bounded linear operator 
  \begin{equation}
      I:C_{c}^{\infty}(\R^{n};S^{m})\to C^{\infty}(\R^{n}\times \R^{n}\setminus \{0\})
      \end{equation}
      defined as 
      \begin{equation}
If(x,\xi) = \int_{-\infty}^{\infty}  f_{i_1\cdots i_m}(x+t\xi)\, \xi^{i_1}\,\cdots \xi^{i_m}\, \mathrm{d}t = \int_{-\infty}^{\infty}\langle {f(x+t\xi), \xi^{m}}\rangle \mathrm{d}t,
\end{equation}
\end{definition}
for all $(x,\xi)\in \mathbb{R}^n\times \mathbb{R}^n \setminus\{0\}$ which determines the lines $\{x+t\xi \mid t\in \R \}$. Hereafter $\langle \cdot, \cdot \rangle$ is the standard dot-product on $\R^{n}$.
The Kernel description of the ray transform is given by the following Saint Venant operator.
\begin{definition}[\cite{Sharafutdinov_book}, Chapter 2]
The Saint Venant operator 
   \[
   W : C^{\infty}(\R^{n};S^{m}) \to C^{\infty}(\R^{n};S^{m}\otimes S^{m})\] 
   is defined by
   \[
(Wu)_{i_{1}\cdots i_{m}j_{1}\dots j_{m}} = \sigma(i_{1}\cdots i_{m})\sigma({j_{1}\dots j_{m}})\sum_{p=0}^{m}(-1)^{p}\binom mp  \frac{\partial^{m} u_{i_{1}\dots i_{m-p}j_{1}\cdots j_{p}}}{\partial x_{j_{p+1}}\cdots \partial x_{j_{m}}\partial x_{i_{m-p+1}}\cdots \partial x_{i_{m}}},
   \]
   where $\binom mp  
   =\frac{m!}{p!(m-p)!}$ are binomial coefficients, $\otimes$ denotes the product of tensors, and $\sigma$ is the symmetrization operator.
   \end{definition}
Let us recall the injectivity result of the ray transform of symmetric tensor fields.
\begin{theorem}[\cite{Sharafutdinov_book}, Theorem 2.2.1]\label{Inj_Ray_Trans}
    Let $n\geq 2$ and $m$ be non-negative integers, $l= \text { max }\{m,2\}$. For a compactly supported field $f\in C^{l}(\R^{n};S^{m})$ the following statements are equivalent:
        \begin{enumerate}
            \item $If = 0;$
        \item  there exist a compactly-supported field $v\in C^{l+1}(\R^{n}; S^{m-1})$ such that its support is contained in the convex hull of the support of $f$ and 
        \[dv = f;\]  
        \item  the equality $Wf = 0$ is valid in $\R^{n}$.
        \end{enumerate}
\end{theorem}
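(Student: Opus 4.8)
\emph{Proof proposal.} This is a classical statement, so the plan is to recall the Fourier-analytic argument behind it, organising the equivalence into a short cycle $(1)\Rightarrow(2)\Rightarrow(3)\Rightarrow(2)\Rightarrow(1)$. The two elementary implications are $(2)\Rightarrow(1)$ and $(2)\Rightarrow(3)$. For $(2)\Rightarrow(1)$, when $f=dv=\sigma(\nabla\otimes v)$, the chain rule gives $\langle f(x+t\xi),\xi^m\rangle=\frac{\D}{\D t}\langle v(x+t\xi),\xi^{m-1}\rangle$ along every line $\{x+t\xi:t\in\R\}$, so $If(x,\xi)=0$ by the fundamental theorem of calculus together with the compact support of $v$. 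For $(2)\Rightarrow(3)$, I would verify $W(dv)=0$ directly from the definition of the Saint Venant operator: the alternating binomial sum, combined with the double symmetrisation over the $i$- and the $j$-indices, is engineered precisely so that the extra symmetrised derivative produces a telescoping cancellation; this is a finite combinatorial computation with no analysis in it.

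The substance is the reverse direction, and I would pass to the Fourier side. Fix $\xi\in\R^n\setminus\{0\}$. Since $If(\cdot,\xi)$ is invariant under translation by $\xi$, its Fourier transform in $x$ is supported on the hyperplane $\xi^\perp$, and the projection–slice identity yields, for a nonzero constant $c$ and $\eta\in\xi^\perp$,
\[
\widehat{If(\cdot,\xi)}(\eta)=c\,\widehat{f}_{i_1\cdots i_m}(\eta)\,\xi^{i_1}\cdots\xi^{i_m}.
\]
Hence $If\equiv 0$ forces, for every fixed $\eta\neq 0$, the homogeneous degree-$m$ polynomial $\xi\mapsto\widehat{f}(\eta)[\xi^m]$ to vanish on the hyperplane $\{\langle\eta,\xi\rangle=0\}$, so it is divisible by the linear form $\langle\eta,\xi\rangle$: there is a symmetric $(m-1)$-tensor $\widehat{w}(\eta)$ with $\widehat{f}(\eta)[\xi^m]=\langle\eta,\xi\rangle\,\widehat{w}(\eta)[\xi^{m-1}]$. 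Polarising in $\xi$ gives $\widehat{f}(\eta)=\sigma\big(\eta\otimes\widehat{w}(\eta)\big)$, and setting $\widehat{v}=\mathrm{i}^{-1}\widehat{w}$ to absorb the symbol of $\partial$ yields $f=dv$ after the inverse Fourier transform; this is $(1)\Rightarrow(2)$. For $(3)\Rightarrow(2)$ I would run the same computation on $\widehat{Wf}$: it is the double symmetrisation of $\sum_{p}(-1)^p\binom{m}{p}\,\eta_{i_{m-p+1}}\cdots\eta_{i_m}\,\eta_{j_{p+1}}\cdots\eta_{j_m}\,\widehat{f}_{i_1\cdots i_{m-p}j_1\cdots j_p}(\eta)$, and a short linear-algebra argument shows this vanishes for all $\eta$ iff $\widehat{f}(\eta)\in\operatorname{range}\big(\zeta\mapsto\sigma(\eta\otimes\zeta)\big)$ for every $\eta$, i.e.\ iff $f=dv$; this closes the cycle.

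Two points need genuine care and contain the real obstacle. First, the regularity bookkeeping: one must check that the tensor $w$ obtained by polynomial division inherits enough smoothness and decay from $f\in C^l_c$ with $l=\max\{m,2\}$ to conclude $v\in C^{l+1}$ — this is a Paley–Wiener estimate, using that dividing by the scalar $\langle\eta,\xi\rangle$ introduces no poles and costs only one order in $|\eta|$. Second, and this is the main difficulty, the sharp support statement $\supp v\subset$ convex hull of $\supp f$ is not delivered by the Fourier construction alone, since a naive primitive of $f$ along a fixed direction is only supported in a half-space. The convex-hull localisation requires a Helgason-type support theorem for the ray transform of symmetric tensor fields (equivalently: if $If$ vanishes on every line avoiding a given compact convex set, then $f=dv$ with $v$ supported in that set), which is proved by analytic microlocal methods or via the Fritz John equations. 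In the write-up I would carry out the Fourier skeleton in detail and invoke the tensorial support theorem at that one step rather than reprove it.
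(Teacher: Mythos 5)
The paper does not actually prove this statement: it is quoted, with attribution, from Sharafutdinov's book (Theorem 2.2.1) and used as a black-box injectivity result, so there is no internal proof to compare you against. Taken on its own terms, your outline of the easy implications and of the Fourier skeleton --- projection--slice identity, divisibility of $\widehat{f}(\eta)[\xi^m]$ by the linear form $\langle\eta,\xi\rangle$ when it vanishes on $\eta^\perp$, polarisation to get $\widehat{f}(\eta)=\sigma(\eta\otimes\widehat{w}(\eta))$, and the symbol-level characterisation of $\ker W$ as the range of $i_\eta$ --- is the standard route and is correct in outline.

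The genuine gap is your treatment of the support statement. The ``Helgason-type support theorem'' you propose to invoke (if $If$ vanishes on all lines missing a compact convex $K$, then $f=dv$ with $\supp v\subset K$) is, when specialised to $K=\mathrm{ch}\,\supp f$ and full data $If\equiv 0$, exactly assertion (2) of the theorem you are proving; quoting it at that step makes the argument circular unless you cite an independent proof, and the microlocal support theorems for tensor ray transforms are usually phrased for the solenoidal part and themselves rely on this kernel description. In fact no support theorem is needed: since $f$ is compactly supported, $\widehat{f}$ extends to an entire function of exponential type whose indicator is the support function of $\mathrm{ch}\,\supp f$ (Paley--Wiener--Schwartz), the divisibility relation continues analytically, and the classical division lemma for entire functions of exponential type (division by the linear form $\langle\zeta,\xi\rangle$ does not increase the indicator) gives $\widehat{v}$ the same indicator, i.e.\ $\supp v\subset\mathrm{ch}\,\supp f$; alternatively, Sharafutdinov's constructive one-sided primitives along rays give the convex-hull localisation directly (for $m=1$, $v(x)=\int_{-\infty}^{0}\langle f(x+t\xi),\xi\rangle\,\mathrm{d}t$ vanishes off the convex hull because a separating hyperplane lets you choose $\xi$ so the backward ray misses $\supp f$, and $If=0$ makes the two one-sided primitives agree). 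Two smaller points: the ``short linear-algebra argument'' you wave at for $(3)\Rightarrow(2)$ --- that the symmetrised Fourier expression vanishes iff $\widehat{f}(\eta)\in\mathrm{range}(i_\eta)$ --- is the real algebraic core (exactness of the symbol sequence of $d$ and $W$) and needs an actual proof, and the claim $v\in C^{l+1}$ with only $f\in C^{l}$, $l=\max\{m,2\}$, is precisely the finite-smoothness bookkeeping the hypotheses are calibrated for, so it cannot be dismissed as routine.
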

From Theorem \ref{Inj_Ray_Trans}, we see that the operator $I$ has an infinite dimensional kernel for $m>0$. A symmetric tensor field can be uniquely decomposed into a solenoidal and a potential part in a bounded domain \cite{Sharafutdinov_book}, and the potential part lies in the kernel of the ray transform. Therefore, unique recovery of entire tensor fields $f$ is impossible from $If$ requires some additional information, which leads to the study of the MRTs. For each integer $k\geq0$, Momentum Ray Transforms are bounded linear operators
\[I^{k}:C_{c}^{\infty}(\R^{n};S^{m})\to C^{\infty}(\R^{n}\times \R^{n}\setminus \{0\}),\]  defined as 
\begin{equation*}
I^kf(x,\xi) := \int_{\R} t^k \, f_{i_1\cdots i_m}(x+t\xi)\, \xi^{i_1}\,\cdots \xi^{i_m}\, \mathrm{d}t \quad \text { for all } \quad (x,\xi)\in \mathbb{R}^n\times \mathbb{R}^n \setminus\{0\}.
\end{equation*}
Notice that, for $k=0, I^{0}f= If$ is the classical ray transform \eqref{Ray_trans}. 
It has been established that \cite{Sharafutdinov_book,Krishnan2018} the knowledge of the MRTs $I^k$ for $k=0,\cdots,m$ uniquely recovers a symmetric $m$-tensor, whereas having any less information would lead to nontrivial kernels.
For the purpose of our result, we define the MRT on the unit sphere bundle. For $f\in C_{c}^{\infty}(\R^{n}; S^{m})$ and each integer $k\geq 0$, we define the restricted MRT $J^{k}$ as
\[
J^kf(x,\xi) := \int_{\R} t^k \, f_{i_1\cdots i_m}(x+t\xi)\, \xi^{i_1}\,\cdots \xi^{i_m}\, \mathrm{d}t = I^{k}f(x,\xi)|_{\R^{n}\times \mathbb{S}^{n-1}}, \quad \mbox{for all } (x,\xi)\in \R^{n}\times \mathbb{S}^{n-1}.
\]
%Notice that the restriction of $I^{k}f(x,\xi)$ to the points on $\R^{n}\times \mathbb{S}^{n-1}$ determines $J^{k}f(x,\xi)$. 
In \cite{Sharafutdinov_book,Krishnan2018,abhishek2019support}, the authors showed the rank $m$ symmetric tensor field $f$ can be uniquely determined by the function $\mathcal{J}^{m}f:= (J^{0}f, J^{1}f,\dots, J^{m}f)$.

In our setup, we end up with a modified version of the above restricted MRTs, where we deal with a sum consisting of MRTs of a function, a vector field, up to a symmetric $m$-tensor field. Moreover, the setup is even more complicated due to the presence of the complex vectors in place of the real vectors $\xi \in \R^{n}$. 
In \cite{bhattacharyya2021unique}, the authors define MRT of $F\in C_{c}^{\infty}(\R^{n}; \mathbf{S}^m\R^{n})$, where
$\mathbf{S}^m=S^0\oplus S^1\oplus\cdots\oplus S^m.$
Any such $F$ can be described uniquely as
\begin{equation}
\begin{aligned}
F &=\sum_{p=0}^{m}f^{(p)}=f^{(0)}_{i_0}+f^{(1)}_{i_1}\,\D x^{i_1} + \cdots+f^{(p)}_{i_1\cdots i_p}\D x^{i_1}\cdots \D x^{i_p}+\cdots+f^{(m)}_{i_1\cdots i_m}\D x^{i_1}\cdots \D x^{i_m}\\
    & \hspace{1cm}	\quad=\left( f^{(0)}_{i_0},f^{(1)}_{i_1},\cdots, f^{(m)}_{i_1\cdots i_m} \right),
\end{aligned}
\end{equation}
which can be viewed as the sum of a function, a vector field, and up to a symmetric $m$-tensor field with $f^{(p)}\in S^{p}$ for each $0\leq p \leq m$.
Next, we recall the definition of MRT of $F\in C_{c}^{\infty}(\R^{n})$ from \cite{bhattacharyya2021unique}.
For every integer $k\geq 0$ and for all $(x,\xi)\in \mathbb{R}^n\times \mathbb{R}^n \setminus\{0\}$ we define
\begin{equation}\label{MRT of F}
    \begin{aligned}
        I^{m,k}F(x,\xi)&= \sum_{p=0}^{m} I^{k} f^{(p)}(x, \xi)\\
        &= \int_{\R} t^k \left(f^{(0)}_{i_0}(x+t\xi)+ f^{(1)}_{i_1}(x+t\xi)\, \xi^{i_1}+\cdots+ f^{(m)}_{i_1\cdots i_m}(x+t\xi)\, \xi^{i_1}\,\cdots \xi^{i_m}\right)\mathrm{d}t.
        \end{aligned}
\end{equation}
We conclude this section by stating the following injectivity result for the MRT, which we will apply repeatedly in the proof of Theorem \ref{Main_Result}. For the proof of the following result, \cite[Lemma 3.7]{bhattacharyya2021unique}.
\begin{theorem}\label{Lemma_MRT}
    Let $n\geq 3$ and $f$ be a smooth compactly supported symmetric $m$-tensor field in $\mathbb{R}^{n}$.  Suppose that for all unit vectors $\eta\perp e_{1}$, we have that
\begin{equation}\label{Injectivity_MRT}
\sum_{i_{1},\ldots,i_{m} =1}^{n}\int_{\mathbb{R}} x_{2}^{k} f_{i_{1} \cdots i_{m}}(0, x_{2}, x^{\prime\prime})\left(\prod_{j=1}^{m}\left(e_{1}+\mathrm{i}\eta\right)_{i_{j}}\right) \mathrm{d} x_{2}=0, \quad \text { for a.e. } x^{\prime\prime} \in \mathbb{R}^{n-2}, 
\end{equation}
\text{ for each } $0\leq k \leq m.$
 Then, 
$$
f(0, x^{\prime})=i_{\delta} v(0, x^{\prime}), \quad \text{ for } \quad m \geq 2,
$$
where $e_{1}$ is the standard basis vector in $\R^{n}$ and  $v$ is a symmetric $(m-2)$-tensor field compactly supported in $x^{\prime}$ variable and
$$
f(0, x^{\prime})=0 \quad \text { for } \quad m=0,1.
$$
\end{theorem}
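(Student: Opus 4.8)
The plan is to reduce Theorem \ref{Lemma_MRT} to the known injectivity statement for the full collection of restricted momentum ray transforms $\mathcal{J}^{m}f = (J^{0}f,\dots,J^{m}f)$, exploiting analyticity in the direction variable $\eta$. Write $\zeta = e_{1}+\mathrm{i}\eta$, so that the hypothesis \eqref{Injectivity_MRT} says that the complexified line integrals
\[
\int_{\mathbb{R}} x_{2}^{k}\, f_{i_{1}\cdots i_{m}}(0,x_{2},x'')\,\zeta^{i_{1}}\cdots\zeta^{i_{m}}\,\mathrm{d}x_{2} = 0
\]
for a.e.\ $x''$ and each $0\le k\le m$, for every unit $\eta\perp e_{1}$. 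The left-hand side, as a function of $\eta$, is the restriction to the unit sphere $\{|\eta|=1,\ \eta\perp e_{1}\}$ of a polynomial in $\eta$ (indeed it is $\langle\text{(a moment of }f\text{)},\zeta^{m}\rangle$, a homogeneous degree-$m$ polynomial in $\zeta$, hence polynomial in $\eta$). First I would homogenize: replacing $\eta$ by an arbitrary vector $y\perp e_{1}$ and scaling, one sees the vanishing on the sphere forces the vanishing of the corresponding expression with $\zeta$ replaced by $e_{1}+\mathrm{i} y$ for all $y\in e_{1}^{\perp}$. Expanding $\zeta^{i_{1}}\cdots\zeta^{i_{m}} = (e_{1}+\mathrm{i}y)^{\otimes m}$ and separating real and imaginary parts (equivalently, reading off the coefficient of each monomial in $y$), this single family of complex identities is equivalent to a family of real identities of the form
\[
\sum \int_{\mathbb{R}} x_{2}^{k}\, f_{i_{1}\cdots i_{m}}(0,x_{2},x'')\, (e_{1})_{i_{1}}\cdots(e_{1})_{i_{r}}\, y_{i_{r+1}}\cdots y_{i_{m}}\,\mathrm{d}x_{2} = 0
\]
for all $0\le r\le m$ and all $y\perp e_{1}$. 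The $r=0$ terms are exactly the restricted MRTs $J^{k}$ (on the hyperplane $x_{1}=0$, in directions $\perp e_{1}$) of the tensor field $f$, but one also gets information from the contractions $r\ge 1$, which is where the extra structure forcing the $i_{\delta}$-form will come from.

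Next I would invoke the injectivity theorem for $\mathcal{J}^{m}$ from \cite{bhattacharyya2021unique, Krishnan2018, abhishek2019support}: knowing $J^{0}f,\dots,J^{m}f$ on lines in a hyperplane, in the directions within that hyperplane, determines $f$ restricted to that hyperplane up to the natural kernel. The point is to identify that kernel precisely. For a single tensor field the kernel of the full momentum ray transform is trivial, so if we had the full $r=0$ data for all directions (not just those perpendicular to $e_{1}$) we would get $f(0,x')=0$. But we only have directions in the hyperplane $e_{1}^{\perp}$; the missing directions are exactly those with a nonzero $e_{1}$-component, and the ambiguity they introduce is precisely a tensor of the form $i_{\delta}v$ — a tensor whose contraction with $\zeta^{m}$ for $\zeta=e_{1}+\mathrm{i}\eta$, $|\eta|=1$, $\eta\perp e_{1}$, vanishes because $\langle\zeta,\zeta\rangle = |e_{1}|^{2}-|\eta|^{2}=0$, so $(i_{\delta}v)_{i_{1}\cdots i_{m}}\zeta^{i_{1}}\cdots\zeta^{i_{m}} = c\,\langle\zeta,\zeta\rangle\, v_{j_{1}\cdots j_{m-2}}\zeta^{j_{1}}\cdots\zeta^{j_{m-2}} = 0$ and likewise for all its $x_{2}$-moments. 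So I would argue: the map $v\mapsto i_{\delta}v$ lands in the kernel of the data, and conversely a dimension count / the explicit injectivity result shows the kernel is exactly this image; hence $f(0,x') = i_{\delta}v(0,x')$ for some symmetric $(m-2)$-tensor $v$, compactly supported in $x'$ because $f$ is and $i_{\delta}$ is (off a null set) injective on symmetric tensors. For $m=0,1$ there is no $i_{\delta}$ available, $\langle\zeta,\zeta\rangle=0$ plays no role, and the $k=0,\dots,m$ data together with the density of directions $\perp e_{1}$ in the relevant sense forces $f(0,x')=0$ directly from the classical injectivity of $\mathcal{J}^{m}$ restricted to a hyperplane.

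The main obstacle I anticipate is the bookkeeping in the second paragraph: cleanly extracting, from the sphere-restricted polynomial identity in $\eta$, the full set of hyperplane MRT identities (the $r=0$ family) together with enough of the contracted ($r\ge1$) identities, and then matching the resulting kernel exactly to $i_{\delta}(S^{m-2})$ rather than something larger. The polarization/homogenization step is routine, but one must be careful that restricting to $|\eta|=1$ loses no information — this is fine because a homogeneous polynomial vanishing on a sphere vanishes identically — and that the null-direction identity $\langle\zeta,\zeta\rangle=0$ is used in both directions: it gives the inclusion $i_{\delta}(S^{m-2})\subset\ker$, and one still needs the quoted injectivity result (\cite[Lemma 3.7]{bhattacharyya2021unique}, or the underlying results of \cite{Krishnan2018, abhishek2019support}) to get the reverse inclusion. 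I would lean on that cited lemma for the hard analytic core and devote the write-up mainly to the reduction and to verifying the compact-support claim for $v$.
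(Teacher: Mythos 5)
There is a genuine gap, and it sits exactly where the content of the theorem lies. Your homogenization step --- ``replacing $\eta$ by an arbitrary vector $y\perp e_{1}$ and scaling, the vanishing on the sphere forces the vanishing with $\zeta=e_{1}+\mathrm{i}y$ for all $y\in e_{1}^{\perp}$'' --- is not valid: the quantity $\langle f,(e_{1}+\mathrm{i}\eta)^{\otimes m}\rangle$ has mixed degrees $0,\dots,m$ in $\eta$ (the fixed $e_{1}$-part destroys homogeneity), so its vanishing for $|\eta|=1$ only gives vanishing modulo the ideal generated by $|\eta|^{2}-1$; moreover, as the identity \eqref{Injectivity_MRT} is actually used in Section \ref{Sec_Proof_Thm}, the variable $x_{2}$ is the coordinate along $\eta$, so the line of integration itself rotates with $\eta$ and cannot be decoupled from the contraction vector. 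If your step were correct, you could separate the monomials in $y$ and obtain, for each $p$, the vanishing of $J^{k}$ ($0\le k\le m$) of the tangential $p$-tensor $\tilde f^{p}$ over all lines of the hyperplane; the classical injectivity of $(J^{0},\dots,J^{p})$ would then force every component $f_{1\cdots 1\,j_{1}\cdots j_{p}}(0,x')$, $j_{i}\ge 2$, to vanish, i.e.\ $f(0,x')=0$ for every $m$ --- contradicting the nontrivial kernel you yourself exhibit: $f=\phi(x)\,\delta$ with $\phi\in C_{c}^{\infty}$ satisfies the hypothesis for $m=2$ because $\langle\zeta,\zeta\rangle=0$, yet $f\neq 0$. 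So the reduction in your second paragraph proves too much; the information lost on restricting to $|\eta|=1$ is precisely what produces the $i_{\delta}v$ term, and no bookkeeping can recover it.

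The second problem is circularity: for ``the hard analytic core'' you propose to lean on \cite[Lemma 3.7]{bhattacharyya2021unique}, but that lemma \emph{is} the statement to be proved (the paper gives no proof of it either, only this citation), so your plan does not constitute an independent argument. A repaired route along your general lines would be: rewrite the hypothesis, as in the Remark following the theorem, as the vanishing of the restricted moment transforms $J^{k}$, $0\le k\le m$, over \emph{all} lines of the hyperplane $\{x_{1}=0\}$ applied to the $\mathbf{S}^{m}(\R^{n-1})$-valued field $F=\sum_{p}\mathrm{i}^{p}\tilde f^{p}$; invoke an injectivity theorem for restricted MRT of such sums (the results behind \cite{Krishnan2018,bhattacharyya2021unique}) to obtain the \emph{pointwise} identity $\langle f(0,x'),(e_{1}+\mathrm{i}\eta)^{\otimes m}\rangle=0$ for all unit $\eta\perp e_{1}$; and only then run your null-vector algebra, dividing the resulting polynomial in $\eta$ by $|\eta|^{2}-1$ to construct $v$ linearly in $f(0,x')$, which also yields the smoothness and compact support of $v$ and the cases $m=0,1$. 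Your correct observations --- the inclusion $i_{\delta}(S^{m-2})\subset\ker$ and the role of $\langle\zeta,\zeta\rangle=0$ --- survive, but as written the proposal neither justifies the key analytic step nor handles the sphere restriction correctly.
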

\begin{remark}
One can rewrite the identity \eqref{Injectivity_MRT} as 
 \[\int_{\R} x_{2}^{k} F(0,x_{2},x^{\prime\prime}) \mathrm{d}x_{2} = 0 \quad \text { for } \quad 0\leq k \leq m,\]
where $F$ denotes the sum of symmetric tensors in $i_{j}= 2, \cdots , n$ indices for $1\leq j \leq m$ as
\[
F(0,x_{2},x^{\prime\prime})= \sum_{p=0}^{m} \mathrm{i}^{p}\tilde{f}^{p}_{i_{1} \cdots i_{p}}(0, x_{2}, x^{\prime\prime})(\eta)_{i_{1}} \cdots (\eta)_{i_{p}}
\quad \text { with }\quad  \tilde{f}^{p}_{i_{1} \cdots i_{p}}= \binom{m}{p} f_{i_{1} \cdots i_{p} 1 \cdots 1}.
\]

Also, note that the presence of non-linearity for the tensor $m\geq2$ generates a nontrivial kernel $v$ in Theorem \ref{Lemma_MRT}.
\end{remark}

\section{Proof of the main result}\label{Sec_Proof_Thm}
In this section, we prove the Theorem \ref{Main_Result}. We divide this section into three parts. In the first part we construct special solutions to the \eqref{Operator} with Dirichlet data vanishing on $\Gamma_0$. %using Complex Geometric Optics solutions on a bounded domain (see Section \eqref{CGOs_ON_BOUNDED_DOAMIN}). 
Next, we use the boundary data to obtain a series of momentum ray transforms of the unknown coefficients. Finally, we determine the parameters using the injectivity in Theorem \ref{Lemma_MRT} repeatedly with particular choices of solutions.

\subsection{Construction of special solutions}\label{Subsec_special_Sol}
Our main goal of this section is to construct complex geometric optics solutions $U\in H^{2m}(\Omega)$ for the polyharmonic equation $\mathcal{L}(x, D)U = 0 \text{ in } \Omega$ with Dirichlet boundary conditions $(-\Delta)^{k-1}U|_{\Gamma_{0}} = 0 \text{ for } 1\leq k \leq m$. We use a reflection argument as in \cite{IsakovPartialData07} to construct these solutions.
We reflect $\Omega$ with respect to the plane $x_{n} = 0$ and denote this reflection by 
\[\Omega^{*} = \{ (x^{\prime}, -x_{n}) :  x =(x^{\prime},x_{n})\in \Omega \}\quad \text { where }\quad  x^{\prime} = \left(x_{1}, \cdots , x_{n-1}\right)\in \mathbb{R}^{n-1}.
 \]
We denote $x^{*} = \left(x^{\prime}, -x_{n}\right)\in \Omega^{*}$ for any $x = \left(x^{\prime}, x_{n}\right)\in \Omega, \text { and } f^{*}(x) = f(x^{*})$ for any function $f$ on $\Omega$. 
We set
 \[
 O= \Omega \cup\text{int($\Gamma_{0}$) } \cup \Omega^{*},
\]
 where int$(\Gamma_{0})\subset \{x_{n} = 0\}$.
We extend the coefficients $q$ and $A^{j}$ for $1\leq j \leq m$ from $\Omega$ to $O$ as
\begin{equation}\label{reflect}
\begin{aligned}
q(x)&:= 
\begin{cases}
q(x) \quad &\text{if } x\in \Omega, \\
q(x^{*}) &\text{if } x\in \Omega^{*},
\end{cases}\\
\left(A^{j}\right)_{\underbrace{n\cdots n}_{p-\text{times}} i_{1}\cdots i_{j-p} }(x) &:=
\begin{cases}
\left(A^{j}\right)_{nn\cdots n i_{1}\cdots i_{j-p} }(x)  \quad & \text{if } x\in \Omega, \\
(-1)^{p}\left(A^{j}\right)_{nn\cdots n i_{1}\cdots i_{j-p} }(x^{*})   &\text{if } x\in \Omega^{*},
\end{cases}
\end{aligned}
\end{equation}
where $ 1 \leq i_1 ,\dots,i_{j-p} \leq n-1,$ $ p$ := number of indices are equal to $n$. Note that, $A^{j},q\in C_{c}^{\infty}(O, \mathbb{C}^{n^{j}})$, for $1\leq j \leq m$.

\begin{proposition}\label{Existence of special solutions}
Let $A^{j}\in C_c^{\infty}({\Omega}, \mathbb{C}^{n^{j}})$ be symmetric for $1\leq j \leq m$ with $A^m$ to be partially isotropic, $q\in L^{\infty}\left(\Omega, \mathbb{C}\right)$ and $h>0$ be small enough. 
Then there exist a solution $U\in H^{2m}\left(\Omega\right)$ to the equation $\mathcal{L}(x, D)U = 0$ in $\Omega$ with $(-\Delta)^{k-1}U|_{\Gamma_{0}} = 0 \text{ for } 1\leq k \leq m $
of the form 
\begin{equation}
     U(x;h) = \mathcal{U}(x; h) - \mathcal{U}(x^{*}; h), \quad x\in \Omega,
\end{equation}
where $ \mathcal{U}\in H^{2m}(O)$
solving $\mathcal{L}(x, D) \mathcal{U}(x ; h)=0$ in $O$ and has the form
\begin{equation}
\begin{aligned}
\mathcal{U}(x ; h) = &e^{\frac{\varphi+\mathrm{i} \psi}{h}}\left(a_{0}(x)+h a_{1}(x)+h^{2} a_{2}(x)\dots+ h^{m-1}a_{m-1}(x)+r(x ; h)\right), \quad x\in O.
\end{aligned}
\end{equation}
Here  $\varphi$ and $\psi$ are defined as in \eqref{Carleman_Weight}, and the functions $a_{0}, \cdots , a_{m-1}$ satisfy the transport equations \eqref{Transport for a_{0}} and \eqref{System of transport} in $O$, and $r$ satisfies the estimate
\begin{equation}\label{Decay r}
\|r(x;h)\|_{H_{\mathrm{scl}}^{2 m}(O)} \leq C h^{m},
\end{equation}
where $C$ is independent of $h$.
\end{proposition}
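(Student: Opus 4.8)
The plan is to construct $\mathcal{U}$ on the extended domain $O$ by applying Lemma \ref{Lemma} (with $O$ in place of $\Omega$), and then verify that the reflected combination $U(x;h) = \mathcal{U}(x;h) - \mathcal{U}(x^*;h)$ both solves $\mathcal{L}(x,D)U = 0$ in $\Omega$ and satisfies the vanishing Navier conditions $(-\Delta)^{k-1}U|_{\Gamma_0} = 0$ for $1 \le k \le m$. The two ingredients that make this work are: (i) the coefficients, extended by the parity rule \eqref{reflect}, belong to $C_c^\infty(O;\mathbb{C}^{n^j})$ and keep the leading perturbation $A^m$ partially isotropic on all of $O$, so Lemma \ref{Lemma} genuinely applies on $O$; and (ii) the parity rule \eqref{reflect} is exactly the one that makes the operator $\mathcal{L}$ commute with the reflection $x \mapsto x^*$ in the appropriate graded sense, so that if $\mathcal{U}$ solves $\mathcal{L}(x,D)\mathcal{U} = 0$ in $O$ then $x \mapsto \mathcal{U}(x^*;h)$ also solves the equation (with the reflected operator, which by \eqref{reflect} is the same operator on $\Omega^*$), hence the difference does too.

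First I would record the reflection behaviour of the operator. Write $R$ for the reflection $x \mapsto x^*$. Since $\partial_{x_n}$ picks up a sign under $R$ while $\partial_{x_i}$ ($i < n$) does not, one checks that for any $j$ and any multi-index pattern with $p$ copies of the index $n$, $D^{j}_{n\cdots n\, i_1\cdots i_{j-p}}(f^*) = (-1)^p (D^{j}_{n\cdots n\, i_1\cdots i_{j-p}} f)^*$. Pairing this with the $(-1)^p$ built into the extension of $A^j$ in \eqref{reflect}, the term $A^{j}_{n\cdots n\, i_1\cdots i_{j-p}} D^{j}_{n\cdots n\, i_1\cdots i_{j-p}}$ is invariant under $R$; summing over all index patterns shows the full perturbation term is $R$-invariant, and $(-\Delta)^m$ and $q$ are manifestly $R$-invariant (the latter by its even extension). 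Consequently $\mathcal{L}(x,D)(g^*) = (\mathcal{L}(x,D)g)^*$ for $g$ on $O$, so $\mathcal{U}(x^*;h)$ solves $\mathcal{L}(x,D)v = 0$ in $O$ whenever $\mathcal{U}$ does, and therefore $U = \mathcal{U} - \mathcal{U}(\cdot^*;h)$ solves $\mathcal{L}(x,D)U = 0$ in $O$, in particular in $\Omega$. The regularity $U \in H^{2m}(\Omega)$ and the estimate \eqref{Decay r} on $r$ are inherited directly from Lemma \ref{Lemma} applied on $O$, since $x \mapsto x^*$ is a smooth diffeomorphism preserving $H^{2m}$ and the semiclassical norms.

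Next I would verify the boundary conditions on $\Gamma_0 \subset \{x_n = 0\}$. The key observation is that $U$ is, by construction, \emph{odd} under $R$: $U(x^*;h) = \mathcal{U}(x^*;h) - \mathcal{U}(x;h) = -U(x;h)$. The Laplacian commutes with $R$, so $(-\Delta)^{k-1}U$ is again odd under $R$ for every $k$. Any function on $O$ that is odd under reflection across $\{x_n = 0\}$ and continuous up to $\{x_n=0\}$ must vanish there; since $(-\Delta)^{k-1}U \in H^{2m-2(k-1)}(O) \subset C(\overline{O})$ for $1 \le k \le m$ (as $2m - 2(k-1) \ge 2 > n/2$ fails in general, so instead one uses the trace theorem on the hyperplane directly), its trace on $\mathrm{int}(\Gamma_0)$ is zero. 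Thus $(-\Delta)^{k-1}U|_{\Gamma_0} = 0$ for $1 \le k \le m$, as required.

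The main obstacle, and the point deserving the most care, is the applicability of Lemma \ref{Lemma} on the enlarged domain $O$: one must confirm that the hypotheses — compactly supported smooth symmetric coefficients with $A^m$ partially isotropic — are preserved by the extension \eqref{reflect}. Compact support in $O$ is clear because the original coefficients are compactly supported in $\Omega$ and the extension is supported in $\Omega \cup \Omega^* \subset O$ away from $\mathrm{int}(\Gamma_0)$; smoothness across $\{x_n=0\}$ holds precisely because the original coefficients vanish to infinite order near $\Gamma_0$ (being in $C_c^\infty(\Omega)$), so the two halves glue to a $C^\infty$ tensor regardless of the parity factor; symmetry is preserved since the reflection acts the same way on all indices of a given value; and partial isotropy of $A^m$ is preserved because $i_\delta$ commutes with $R$ up to the same parity bookkeeping — writing $A^m = i_\delta \overline{A}^{m-2}$ and extending $\overline{A}^{m-2}$ by the analogous rule makes the extended $A^m$ equal to $i_\delta$ of the extended $\overline{A}^{m-2}$. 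Once these checks are in place the construction of $\mathcal{U}$ is exactly Lemma \ref{Lemma}, and the rest is the short symmetry argument above.
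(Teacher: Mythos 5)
Your proposal is correct and follows essentially the same route as the paper: extend the coefficients to $O$ by the parity rule \eqref{reflect}, apply Lemma \ref{Lemma} on $O$, and use the reflection symmetry to check that $U(x;h)=\mathcal{U}(x;h)-\mathcal{U}(x^{*};h)$ solves $\mathcal{L}(x,D)U=0$ in $\Omega$ with $(-\Delta)^{k-1}U|_{\Gamma_{0}}=0$. The paper compresses your parity computation for $\mathcal{L}$, the verification that the extension stays admissible (smooth, symmetric, compactly supported, $A^{m}$ partially isotropic), and the odd-symmetry/trace argument on $\{x_{n}=0\}$ into the phrase ``a direct calculation shows,'' so your write-up simply supplies the details it omits.
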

\begin{proof} 
We extend the operator $\mathcal{L}(x, D)$ over $O$ by extending $q$ and $A^{j}$ for $1\leq j\leq m$ as \eqref{reflect}. Applying Lemma \ref{Lemma} to $A^{j}\in C_{c}^{\infty}(O)$ and $q\in L^{\infty}(O)$ we get a CGO solution $\mathcal{U}(x; h)\in H^{2m}(O)$ of $\mathcal{L}(x, D) \mathcal{U}(x; h)=0$ in $O$ as
\begin{equation}
\mathcal{U}(x; h) = e^{\frac{\varphi+\mathrm{i} \psi}{h}}\left(a_{0}(x)+h a_{1}(x)+h^{2} a_{2}(x)\dots+ h^{m-1}a_{m-1}(x)+r(x ; h)\right), \text{ in } O, 
\end{equation}
with the decay estimate \eqref{Decay r}.
A direct calculation shows
\[\mathcal{L}(x, D)U(x; h)= \mathcal{L}(x, D)\left(\mathcal{U}(x; h)-\mathcal{U}(x^{*}; h)\right) =0, \quad x\in \Omega\subset \left\{\mathbb{R}^{n};
 x_{n}>0\right\}, \]
and $(-\Delta)^{k-1}U|_{\Gamma_{0}}=0$ for all  $k=1,2, \cdots m$.
\end{proof}

We recall our choices of phase functions
\begin{equation*}
\varphi(x)= \mu^{(2)}\cdot x \quad \text{ and }\quad \psi(x) = \left(\frac{h\xi}{2}+\sqrt{1-h^2\frac{|\xi|^2}{4}}\mu^{(1)}\right)\cdot x ,
\end{equation*}
where $\xi,\mu^{(1)},\mu^{(2)}\in\R^n$ be such that $|\mu^{(1)}|=|\mu^{(2)}|=1$ and $\mu^{(1)}\cdot\mu^{(2)}=\mu^{(1)}\cdot\xi=\mu^{(2)}\cdot\xi=0$ (here we use that $n\geq 3$).
We define $\zeta_{1}$, $\zeta_{2}\in \mathbb{C}^{n}$ by
\begin{equation}\label{Zeta_1,2}
\begin{aligned}
 &\zeta_1=\frac{ih\xi}{2}+i\sqrt{1-h^2\frac{|\xi|^2}{4}}\mu^{(1)}+\mu^{(2)},\\
 &\zeta_2=-\frac{ih\xi}{2}+i\sqrt{1-h^2\frac{|\xi|^2}{4}}\mu^{(1)}-\mu^{(2)}.
 \end{aligned}
\end{equation}
Using Lemma \ref{Lemma} we write the CGO solutions to the equations $\mathcal{L}(x, D)\mathcal{U} =0$ and $\widetilde{\mathcal{L}}^{*}(x, D)\mathcal{V}=0$ in $O$ respectively
\begin{equation}\label{CGO-ON_O}
\begin{aligned}
\mathcal{U}(x ; h)= &e^{\frac{\zeta_{1}\cdot x}{h}}\left(a_{0}(x)+h a_{1}(x)+h^{2} a_{2}(x)\dots +h^{m-1}a_{m-1}(x)+r(x ; h)\right)= e^{\frac{\zeta_{1}\cdot x}{h}}(\mathrm{a}(x;h)), \\
\mathcal{V}(x ; h) =&e^{\frac{\zeta_{2}\cdot x}{h}}\left(b_{0}(x)+h b_{1}(x)+h^{2} b_{2}(x)\dots +h^{m-1}b_{m-1}(x)+\widetilde{r}(x ; h)\right)= e^{\frac{\zeta_{2}\cdot x}{h}}(\mathrm{b}(x;h)).
\end{aligned}
\end{equation}
We have the following decay estimates for the remainder terms
\begin{equation}\label{Decay_remainder_terms}
\|r\|_{H^{2m}_{\mathrm{scl}(O)}} = \mathcal{O}\left(h^{m}\right) \text { and } \|\widetilde{r}\|_{H^{2m}_{\mathrm{scl}(O)}} = \mathcal{O}\left(h^{m}\right).
\end{equation}
Following Proposition \ref{Existence of special solutions}, we write down the special solutions: 
\begin{align}\label{CGO SOLUTION 1}
      U(x ; h)=e^{\frac{\zeta_{1} \cdot x}{h}}\left(\mathrm{a}(x; h)\right) - e^{\frac{\zeta_{1}\cdot x^{*}}{h}}\left(\mathrm{a}(x^{*}; h)\right),
 \end{align}
 \begin{equation}
 \begin{aligned}\label{CGO SOLUTION 2}
 V(x ; h)=e^{\frac{\zeta_{2}\cdot x}{h}}\left(\mathrm{b}(x; h)\right) - e^{\frac{\zeta_{2} \cdot x^{*}}{h}}\left(\mathrm{b}(x^{*}; h)\right),
\end{aligned}
\end{equation}
where $ U(x;h) \text{ and } V(x;h)$ solves the polyharmonic equations $\mathcal{L}(x;D)U=0$, $\widetilde{\mathcal{L}}^{*}( x, D)V=0$ in $\Omega$, with the required boundary conditions $(-\Delta)^{k-1}U|_{\Gamma_{0}} = 0 = (-\Delta)^{k-1}V|_{\Gamma_{0}} \text{ for } 1\leq k \leq m$.

\subsection{The integral identity }  
We recall our given operator \eqref{Operator}
\[
 \mathcal{L}(x, D)= (-\Delta)^m + \sum_{j=0}^{m-1} \sum_{i_{1}, \cdots, i_{m-j}=1}^{n} A_{i_{1} \cdots i_{m-j}}^{m-j}(x) D_{i_{1} \cdots i_{m-j}}^{m-j}+q(x),
\]
where $A^{j}\in C_c^{\infty}(\Omega) \text { for } 1\leq j\leq m$ and $q \in L^{\infty}({\Omega})$. By using integration by parts, we get
\begin{equation}\label{Int_Identity}
\begin{aligned}
\int_{\Omega} &\left(\mathcal{L}(x,D)u\right)\overline{v}\, \D x - \int_{\Omega} u\, \overline{\mathcal{L}^{*}(x,D)v}\,\mathrm{d} x\\
=&\sum_{l=1}^{m}\int_{\partial\Omega} \left((-\Delta)^{(m-l)}u\overline {\left(\partial_{\nu} (-\Delta)^{l-1}v\right)}-
 (\partial_{\nu} (-\Delta)^{(m-l)}u) \overline{\left((-\Delta)^{l-1}v\right)}\right)\,\mathrm{d} S\\
&+\mathrm{i}\sum_{j=1}^{m}\sum_{l=1}^{j}\sum_{i_1,\cdots,i_{j}=1}^{n} \int_{\partial\Omega} (-1)^l \nu_{i_{j-l+1}}\left(D^{j-l}_{i_1 \dots i_{j-l}}u\right) D^{l-1}_{i_{j-l+2} \dots i_{j}}\left(A^{j}_{i_1 \dots i_{j}} \overline{v}\right)\, \mathrm{d} S,
\end{aligned}
\end{equation}
where $\D S$ is the surface measure on $\partial \Omega$ and for all $u, v \in H^{2m}(\Omega)$.
 
Let us take $u = U - \widetilde {U}$ where $U\in H^{2 m}(\Omega)$ solves
\begin{equation*}
\begin{aligned}
\mathcal{L}(x, D) U = 0, \quad \text { in } \Omega, \quad
\mbox{with }(-\Delta)^{k} U|_{\Gamma_0} = 0,
\quad \mbox{for } k=0,1, \ldots, m-1;
\end{aligned}
\end{equation*}
and $\widetilde{U}$ solves
\begin{equation}\label{Eq_1}
\widetilde{\mathcal{L}}(x, D) \widetilde{U}=0 \quad \mbox{in } \Omega, \quad \mbox{with } (-\Delta)^{k} \widetilde{U}|_{\partial\Omega} = (-\Delta)^{k} U|_{\partial\Omega}, \quad \mbox{for } k=0,1, \ldots, m-1.
\end{equation}
From the assumptions of Theorem \ref{Main_Result} on the boundary measurements, we must have 
\begin{equation}\label{Eq_2}
\partial_{\nu}(-\Delta)^{k} U|_{\Gamma}=\partial_{\nu}(-\Delta)^{k} \widetilde{U}|_{\Gamma}, 
\quad \mbox{for } k=0, 1, 2, \ldots, m-1.
\end{equation}
We choose $V \in H^{2m}(\Omega)$ to be any solution satisfying 
\begin{equation}\label{Eq_3}
\widetilde{\mathcal{L}}^{*}(x, D) V=0 \text{ in } \Omega, \quad \mbox{with }  (-\Delta)^{k}V|_{\Gamma_{0}} = 0, \quad \mbox{for } k =0,1 \cdots m-1.
\end{equation}
From \eqref{Eq_1}, \eqref{Eq_2} and \eqref{Eq_3}, using a similar argument as in \eqref{Int_Identity} we obtain
\begin{equation}\label{Int_Iden}
\begin{aligned}
0= \int_{\Omega} \left(\widetilde{\mathcal{L}}(x,D)(U-\widetilde{U})\right)\overline{V} \D x - \int_{\Omega} (U-\widetilde{U})\overline{\widetilde{\mathcal{L}}^{*}(x,D)V}\D x 
= \int_{\Omega} \left(\widetilde{\mathcal{L}}(x,D)(U-\widetilde{U})\right)\overline{V} \D x.
\end{aligned}
\end{equation}
A straightforward calculation entails
\begin{equation*}
\begin{aligned}
 \widetilde{\mathcal{L}}(x, D)(U-\widetilde{U})
%     =\widetilde{\mathcal{L}}(x, D) U-\widetilde{\mathcal{L}}(x, D) \widetilde{U}
%     =\widetilde{\mathcal{L}}(x, D) U-\mathcal{L}(x, D) U
 =&(\widetilde{\mathcal{L}}(x, D)-\mathcal{L}(x, D)) U\\
 =&
\sum_{j=0}^{m-1} \sum_{i_{1}, \cdots, i_{m-j}=1}^{n}\left( \widetilde{A}_{i_{1} \cdots i_{m-j}}^{m-j}- A_{i_{1} \cdots i_{m-j}}^{m-j} \right) D_{i_{1} \cdots i_{m-j}}^{m-j}U+(\widetilde{q}-q) U.
\end{aligned}
\end{equation*}
Therefore, the identity \eqref{Int_Iden} implies,
\begin{equation}\label{INTEGRAL IDENTITY}
\begin{aligned}
0=\sum_{j=0}^{m-1} \sum_{i_{1}, \cdots, i_{m-j}=1}^{n}\int_{\Omega}\left( \widetilde{A}_{i_{1} \cdots i_{m-j}}^{m-j}- A_{i_{1} \cdots i_{m-j}}^{m-j} \right) D_{i_{1} \cdots i_{m-j}}^{m-j}U\overline{V} \mathrm{d} x  + \int_{\Omega}(\widetilde{q}-q) U \overline{V} \mathrm{d} x.
\end{aligned}
\end{equation}
For simplicity, we denote $W^{j}= \widetilde{A}^{j} - A^{j}$ for $ j=1,2,\ldots,m \text { and } W^{0}= \widetilde{q}-q$.
Note that, using Proposition \ref{Existence of special solutions}, we can choose $U$ and $V$ to be of the form \eqref{CGO SOLUTION 1}, \eqref{CGO SOLUTION 2}.
We compute
\begin{equation}\label{Product of exponential}
\begin{aligned}
e^{x\cdot\zeta_{1}/h}e^{x\cdot \overline{\zeta}_{2}/h}\,=\,&e^{ix\cdot\xi} \\ \vspace{1ex}
e^{x\cdot\zeta_{1}/h}e^{(x^{*})\cdot\overline{\zeta}_{2}/h}\,=\,&e^{ix\cdot \xi_{+}+2\mu^{(2)}_{n}x_{n}/h}\\ 
\vspace{1ex}
e^{(x^{*})\cdot\zeta_{1}/h}e^{x\cdot\overline{\zeta}_{2}/h}\,=\,&e^{ix\cdot \xi_{-}-2\mu^{(2)}_{n}x_{n}/h} \\ 
\vspace{1ex}
e^{(x^{*})\cdot\zeta_{1}/h}e^{(x^{*})\cdot\overline{\zeta}_{2}/h}\,=\,&
e^{i(x^{*})\cdot\xi}
\end{aligned}
\end{equation}
where $\xi_{+}$ , $\xi_{-} \in \R^{n}$ are 
\[\xi_{+} =\left(\xi',+\frac{2}{h}\sqrt{1-h^{2}\frac{|\xi|^{2}}{4}}\mu^{(1)}_{n}\right),
\qquad
\xi_{-} =\left(\xi',-\frac{2}{h}\sqrt{1-h^{2}\frac{|\xi|^{2}}{4}}\mu^{(1)}_{n}\right),\quad \mbox{where }\xi'=(\xi_1,\cdots,\xi_{n-1}).
\]

To eliminate the unwanted terms, we further assume that $\mu^{(2)}_{n}=0$ and $\mu^{(1)}_{n}\neq 0$. 
So, by the Riemann--Lebesgue lemma and the fact that $|\xi_{+}|,|\xi_{-}|\rightarrow\infty$ as $h\rightarrow 0$, we have the following four limiting identities,
\begin{equation}\label{vanish_1}
\lim_{h\to 0} \int_{\Omega}(W^{0})e^{(x^{*})\cdot\zeta_{1}/h}e^{x\cdot\overline{\zeta}_{2}/h}\mathrm{a}(x^{*} ; h) \overline{\mathrm{b}(x ; h)} \mathrm{d}x
=0.
\end{equation}
\begin{equation}\label{vanish_2}
\begin{aligned}
\lim_{h\to 0} \sum_{j = 0}^{m-1}\sum_{i_{1},\cdots, i_{m-j}=1}^{n} h^{m-j}\int_{\Omega}&\left(W_{i_{1}\cdots i_m-j}^{m-j}\right)e^{(x^{*})\cdot\zeta_{1}/h}e^{x\cdot\overline{\zeta}_{2}/h} \\
&\times\left(\prod_{k=1}^{m-j}\left(\frac{-\mathrm{i}}{h} (\mu^{(2)}+\mathrm{i}\mu^{(1)})_{i_{k}}+D_{i_{k}}\right)\mathrm{a}(x^{*} ; h)\right)\overline{\mathrm{b}(x ; h)} \mathrm{d}x
=0.
\end{aligned}
\end{equation}
\begin{equation}\label{vanish_3}
\lim_{h\to 0} \int_{\Omega}(W^{0})e^{x\cdot\zeta_{1}/h}e^{(x^{*})\cdot\overline{\zeta}_{2}/h}\mathrm{a}(x ; h)\overline{\mathrm{b}(x^{*} ; h)} \mathrm{d}x
=0.
\end{equation}
\begin{equation}\label{vanish_4}
\begin{aligned}
\lim_{h\to 0} \sum_{j = 0}^{m-1}\sum_{i_{1},\cdots, i_{m-j}=1}^{n} h^{m-j}\int_{\Omega}&\left(W_{i_{1}\cdots i_m-j}^{m-j}\right)e^{x\cdot\zeta_{1}/h}e^{(x^{*})\cdot\overline{\zeta}_{2}/h}\\
&\times\left(\prod_{k=1}^{m-j}\left(\frac{-\mathrm{i}}{h} (\mu^{(2)}+\mathrm{i}\mu^{(1)})_{i_{k}}+D_{i_{k}}\right)\mathrm{a}(x ; h)\right)\overline{\mathrm{b}(x^{*} ; h)} \mathrm{d}x
=0.
\end{aligned}
\end{equation}

Now, inserting the CGO solutions \eqref{CGO SOLUTION 1} and \eqref{CGO SOLUTION 2} into the identity \eqref{INTEGRAL IDENTITY} and using \eqref{Product of exponential} we get
\begin{equation*}
\begin{aligned}
0=&\sum_{j = 0}^{m-1}\sum_{i_{1},\cdots, i_{m-j} =1}^{n} \int_{\Omega}\left(W_{i_{1}\cdots i_{m-j}}^{m-j}\right)\prod_{k=1}^{m-j}\left(\frac{-\mathrm{i}}{h} (\mu^{(2)}+\mathrm{i}\mu^{(1)})_{i_{k}}+D_{i_{k}}\right)\mathrm{a}(x ; h)\overline{\mathrm{b}(x ; h)} e^{ix\cdot\xi}\mathrm{d}x\\
&+\int_{\Omega}(W^{0}) e^{ix\cdot\xi} \mathrm{a}(x ; h) \overline{\mathrm{b}(x ; h)} \mathrm{d} x\\
&-\sum_{j = 0}^{m-1}\sum_{i_{1},\cdots, i_{m-j}=1}^{n}\int_{\Omega}\left(W_{i_{1}\cdots i_m-j}^{m-j}\right)e^{x\cdot\zeta_{1}/h}e^{(x^{*})\cdot\overline{\zeta}_{2}/h}\prod_{k=1}^{m-j}\left(\frac{-\mathrm{i}}{h} (\mu^{(2)}+i\mu^{(1)})_{i_{k}}+D_{i_{k}}\right)\mathrm{a}(x ; h) \\
&\hspace{3cm}\times\overline{\mathrm{b}(x^{*} ; h)} \mathrm{d}x 
- \int_{\Omega}(W^{0})e^{x\cdot\zeta_{1}/h}e^{(x^{*})\cdot\overline{\zeta}_{2}/h}\mathrm{a}(x^{*} ; h) \overline{\mathrm{b}(x ; h)} \mathrm{d}x \\
&-\sum_{j = 0}^{m-1}\sum_{i_{1},\cdots, i_{m-j}=1}^{n}\int_{\Omega}\left(W_{i_{1}\cdots i_m-j}^{m-j}\right)e^{(x^{*})\cdot\zeta_{1}/h}e^{x\cdot\overline{\zeta}_{2}/h} \\
&\hspace{3cm}\times\left(\prod_{k=1}^{m-j}\left(\frac{-\mathrm{i}}{h} (\mu^{(2)}+ i\mu^{(1)})_{i_{k}}+D_{i_{k}}\right)\mathrm{a}(x^{*} ; h)\right)
\overline{\mathrm{b}(x ; h)} \mathrm{d}x \\
&- \int_{\Omega}(W^{0})e^{(x^{*})\cdot\zeta_{1}/h}e^{x\cdot\overline{\zeta}_{2}/h}\mathrm{a}(x ; h)\overline{\mathrm{b}(x^{*} ; h)} \mathrm{d}x \\
&+\sum_{j = 0}^{m-1}\sum_{i_{1},\cdots, i_{m-j} =1}^{n} \int_{\Omega}\left(W_{i_{1}\cdots i_{m-j}}^{m-j}\right)\prod_{k=1}^{m-j}\left(\frac{-\mathrm{i}}{h} (\mu^{(2)}+\mathrm{i}\mu^{(1)})_{i_{k}}+D_{i_{k}}\right)\mathrm{a}(x^{*} ; h)\\
&\hspace{3cm} \times\overline{\mathrm{b}(x^{*} ; h)}e^{i(x^{*})\cdot\xi} \mathrm{d}x
+\int_{\Omega}(W^{0})e^{i(x^{*})\cdot\xi} \mathrm{a}(x^{*} ; h) \overline{\mathrm{b}(x^{*} ; h)} \mathrm{d}x.
\end{aligned}
\end{equation*}
Making the change of variables $x\mapsto x^{*}$ we obtain the integral identity in $O$ as
\begin{equation}\label{Integral Identity Main}
\begin{aligned}
0=&\sum_{j = 0}^{m-1}\sum_{i_{1},\cdots, i_{m-j} =1}^{n} \int_{O}\left(W_{i_{1}\cdots i_{m-j}}^{m-j}\right)\left(\prod_{k=1}^{m-j}\left(\frac{-\mathrm{i}}{h} (\mu^{(2)}+\mathrm{i}\mu^{(1)})_{i_{k}}+D_{i_{k}}\right)\mathrm{a}(x ; h)\right)\\
&\times\overline{\mathrm{b}(x ; h)}e^{ix\cdot\xi} \mathrm{d}x-\sum_{j = 0}^{m-1}\sum_{i_{1},\cdots, i_{m-j}=1}^{n}\int_{O}\left(W_{i_{1}\cdots i_m-j}^{m-j}\right)e^{x\cdot\zeta_{1}/h}e^{(x^{*})\cdot\overline{\zeta}_{2}/h}\\
&\times\left(\prod_{k=1}^{m-j}\left(\frac{-\mathrm{i}}{h} (\mu^{(2)}+i\mu^{(1)})_{i_{k}}+D_{i_{k}}\right)\mathrm{a}(x ; h)\right)\overline{\mathrm{b}(x^{*} ; h)} \mathrm{d}x\\
&- \int_{O}(W^{0})e^{(x^{*})\cdot\zeta_{1}/h}e^{x\cdot\overline{\zeta}_{2}/h}\mathrm{a}(x^{*} ; h) \overline{\mathrm{b}(x ; h)} \mathrm{d}x +\int_{O}(W^{0})e^{i x\cdot \xi} \mathrm{a}(x ; h) \overline{\mathrm{b}(x ; h)} \mathrm{d} x,
\end{aligned}
\end{equation}
for all $\xi,\mu^{(1)},\mu^{(2)}\in\mathbb{R}^{n}$ with
\begin{equation}\label{Ortho_condition}
\mu^{(1)}\cdot\mu^{(2)}=\xi\cdot\mu^{(1)}=\xi\cdot\mu^{(2)}=0,\quad |\mu^{(1)}|=|\mu^{(2)}|=1, \quad \mu^{(2)}_{n}=0, \quad \mu^{(1)}_{n}\neq 0.
\end{equation}
Recall the form of $a(x;h)$ and $b(x;h)$ from \eqref{CGO SOLUTION 1}, \eqref{CGO SOLUTION 2}.
Here $ a_{0}(x), b_{0}(x) \in C^{\infty}(\overline{O}, \mathbb{C})$ satisfy the transport equations
\begin{equation}\label{Transport Eqn}
\left( 2(\mu^{(2)}+i\mu^{(1)})\cdot \nabla \right)^{m} a_{0}(x) = \left( 2(-\mu^{(2)}+i\mu^{(1)})\cdot \nabla \right)^{m} b_{0}(x) = 0 \text { in } O \text { for } m\geq 2. 
\end{equation}
Next, we solve the transport equations \eqref{Transport Eqn} following \cite{bhattacharyya2021unique}. 
Observe that, up to a rotation, we can take $\mu^{(1)}=e_n$ and $\mu^{(2)}=e_j$, $j\neq n$ where $\{e_1,\cdots,e_n\}$ is the standard orthonormal basis for $\R^n$.
Then the transport equations \eqref{Transport Eqn} become
\begin{equation}
2^m\left(\partial_{x_j}+ \mathrm{i} \partial_{x_n}\right)^m a_0=0 \text{ and }  2^m\left(\partial_{x_j}+ \mathrm{i} \partial_{x_n}\right)^m \overline{b_0}=0 \quad \text { in } O \text { for } m\geq 2.
\end{equation}
For $1\leq j \leq n-1$, denoting $z=x_j + \mathrm{i} x_n$ the above transport equations reduce to 
\begin{equation}\label{Trans Eqn new}
\partial_{\bar z}^m a_0=0 \quad  \text{ and } \quad \partial_{\bar z}^{m}\overline{b_0} = 0.
\end{equation}
The general solution of equation \eqref{Trans Eqn new} can be given by
\begin{equation}
    a_0(x)=g(x'')\sum_{k=0}^{m-1}(z-\bar{z})^k f_k(z),
\end{equation}
where $f_k$ is a holomorphic function for all $0 \leq k \leq m-1$, $g$ is any smooth function and $x''=x-x_je_j - x_ne_n$.

\subsection{Recovering the coefficients}\label{Subsec_Coeff_recovery}
In this section, we closely follow \cite{bhattacharyya2021unique} to recover the coefficients from the integral equation \eqref{Integral Identity Main}. We multiply \eqref{Integral Identity Main} by suitable powers of $h$ and take $h\to 0$ to obtain momentum ray transforms of the coefficients. We recall the definition of partially isotropic tensors. We say a tensor $W^j$, $j\geq 2$ is partially isotropic if there exists a symmetric $(j-2)$-tensor field  $\overline{W}^{j-2}$ in $O$ such that $W^{j}=i_{\delta}\overline{W}^{j-2}=\overline{W}^{j-2} \odot \delta$, where $\delta$ is the Kronecker delta tensor and $\odot$ denotes the symmetric product of tensors. We assume that $A^{m}=i_{\delta} \overline{A}^{m-2}$ (respectively for $\widetilde{A}^{m}$ ) and $\operatorname{denote} \overline{W}^{m-2}= \overline{\widetilde{A}}^{m-2}- \overline{A}^{m-2}$. 
 
 \begin{proof}[Proof of Theorem \ref{Main_Result}]
 \textbf{Step 1.}
We start with a rotation of coordinates in \eqref{Integral Identity Main} by taking  $e_n$ to $e_1$ and denote $\eta$ as any unit vector perpendicular to $e_1$.
So, denoting the new coordinates by $y$ (w.r.t. some orthonormal basis $\{e_1,\eta,\cdots\}$), we get $z=y_1+\mathrm{i}(y\cdot\eta)=y_1+\mathrm{i}y_2$ and $y=(y_1,y') = (y_1,y_2, y'')$.
We multiply \eqref{Integral Identity Main} by $h^{m-1}$ and let $h\to 0$ along with \eqref{vanish_1}, \eqref{vanish_2},
 \eqref{vanish_3}, \eqref{vanish_4}, and \eqref{Decay_remainder_terms} we have
\begin{equation}\label{Int_id_h2}
\begin{aligned}
&\sum_{i_{1},\dots ,i_{m-2} =1}^{n} \int_{O}\overline{W}_{i_{1}\cdots ,i_{m-2}}^{m-2}\left(\prod_{k =1}^{m-2}(e_1+\mathrm{i}\eta)_{i_{k}} \right)\left[-T a_{0}(y)\right] \overline{b_{0}(y)}\mathrm{~d} y\\
&+\sum_{i_{1} , \dots, i_{m-1}=1}^{n} \int_{O} W_{i_{1} \cdots i_{m-1}}^{m-1} \left(\prod_{k =1}^{m-1} (e_{1}+\mathrm{i}\eta)_{i_{k}}\right)a_{0}(y)  \overline{b_{0}(y)}\mathrm{~d} y=0.
\end{aligned}
\end{equation}
We choose the amplitudes $ a_{0}(y) \text{ and } b_{0}(y)$ as 
\begin{equation*}
    a_{0}(y) = 1 \mbox { and }  \overline{b_{0}(y)} = (z-\bar{z})^{k}f(z) g(y^{\prime\prime}), \quad \text { for all } k \text { with } 0\leq k\leq m-1,
\end{equation*}
for any holomorphic function $f(z)$ and any smooth function $g(y^{\prime\prime})$.
For $\lambda \in \R$ we choose
\[ f(z) = e^{-\mathrm{i}\lambda z}. \]
Note that, using compact support of $W^j$'s we can extend $W^{m-1}(\cdot, \cdot, y^{\prime\prime})=0$ over $\mathbb{R}^{2} \backslash O$,  therefore, for each $0\leq k \leq m-1$ we obtain, 
\begin{equation*}
\int_{\mathbb{R}^{n-2}}\left( \int_{\mathbb{R}^{2}} \sum_{i_{1}, \cdots , i_{m-1}=1}^{n} W_{i_{1} \cdots i_{m-1}}^{m-1} \left(\prod_{k =1}^{m-1}(e_{1}+\mathrm{i}\eta)_{i_{k}}\right)y_2^{k} e^{-\mathrm{i}\lambda z}\mathrm{~d}y_{1}\mathrm{~d}y_2\right)g(y^{\prime\prime}) \mathrm{~d} y^{\prime\prime}=0.
\end{equation*}
Varying $g$ to be any smooth function in the $y''$ variable, we obtain
\begin{equation*}
  \int_{\mathbb{R}^{2}} \sum_{i_{1}, \cdots , i_{m-1}=1}^{n} W_{i_{1} \cdots i_{m-1}}^{m-1} \left(\prod_{k =1}^{m-1}(e_{1}+\mathrm{i}\eta)_{i_{k}}\right) y_2^{k} e^{-\mathrm{i}\lambda z}\mathrm{~d}y_{1}\mathrm{~d}y_2= 0.
\end{equation*}
After performing the integration in the $y_{1}$ variable we obtain
\begin{equation}\label{m-1 MRT}
     \int_{\mathbb{R}} \sum_{i_{1}, \cdots , i_{m-1}=1}^{n}\widehat{ W}_{i_{1} \cdots i_{m-1}}^{m-1}\left(\prod_{k =1}^{m-1}(e_{1}+\mathrm{i}\eta)_{i_{k}}\right) y_2^{k} e^{\lambda y_2}\mathrm{~d}y_2 =0, \quad\text{ for a.e. }y^{\prime\prime}\in \mathbb{R}^{n-2}  \text{ and for all } \lambda \in \mathbb{R},
\end{equation}
where we denote $\widehat{.}$
to be the partial Fourier transform in the $y_{1}$ variable. We set $\lambda = 0$ in above to obtain
\begin{equation*}
     \int_{\mathbb{R}} \sum_{i_{1}, \cdots , i_{m-1}=1}^{n}\widehat{ W}_{i_{1} \cdots i_{m-1}}^{m-1}\left(\prod_{k =1}^{m-1}(e_{1}+\mathrm{i}\eta)_{i_{k}}\right) y_2^{k} \mathrm{~d}y_2=0.
\end{equation*}
Since the above identity can be interpreted as the vanishing of a certain MRT, by using Lemma \ref{Lemma_MRT}, we see that there exists a
symmetric $(m-3)$-tensor field $\widehat{V}^{m-1, 1,0}(0,y^{\prime})$, compactly supported in $y^{\prime}$ such that 
\[
\widehat{W}^{m-1}(0,y^{\prime}) = 
\begin{cases}
i_{\delta}\widehat{V}^{m-1, 1,0}(0,y^{\prime}), \quad &\mbox{when } m\geq3\\
0 & \mbox{when } m=2
\end{cases},
\quad \mbox{for a.e. }  y^{\prime}\in \R^{n+1}.
\]
Next, for $m\geq3$, we differentiate \eqref{m-1 MRT} with respect to $\lambda$ and set $\lambda = 0$ to get
\begin{equation}\label{MRT Con_1}
    \begin{aligned}
         0=&\int_{\mathbb{R}}  \sum_{i_{1}, \cdots , i_{m-1}=1}^{n}\left(\widehat{ W}_{i_{1} \cdots i_{m-1}}^{m-1}(0, y^{\prime})\prod_{k =1}^{m-1}(e_{1} + \mathrm{i}\eta)_{i_{k}}\right)y_2^{k+1} \mathrm{~d}y_2\\
    &+ \sum_{i_{1}, \cdots , i_{m-1}=1}^{n}\int_{\mathbb{R}} y_2^{k}\frac{\mathrm{d}}{\mathrm{d}\lambda}\left(\widehat{ W}_{i_{1} \cdots i_{m-1}}^{m-1}(0, y^{\prime})\prod_{k =1}^{m-1}(e_{1} + \mathrm{i}\eta)_{i_{k}}\right) \mathrm{~d}y_2= 0.
    \end{aligned}
\end{equation}
Since we have $\widehat{W}^{m-1}(0, y^{\prime}) = i_{\delta}\widehat{V}^{m-1, 1, 0}(0, y^{\prime})$, 
so the first term in \eqref{MRT Con_1} vanishes using the fact that $(e_{1}+\mathrm{i}\eta)\cdot(e_{1}+\mathrm{i}\eta) = 0.$
Now, repeating the same steps as before, we see that there exists a symmetric $(m-3)$-tensor field $\widehat{V}^{m-1, 1, 1}(0, y^{\prime})$ such that 
$$
\left.\frac{\mathrm{d}}{\mathrm{d} \lambda}\right|_{\lambda=0} \widehat{W}^{m-1}(0, y^{\prime})=i_{\delta} \widehat{V}^{m-1, 1, 1}(0, y^{\prime}), \quad \text { for a.e. } y^{\prime} \text {. }
$$
Repeating the same technique we obtain
\begin{equation}\label{rth_ derivative of m-1 tensor}
\left.\frac{\mathrm{d}^{r}}{\mathrm{d} \lambda^{r}}\right|_{\lambda=0} \widehat{W}^{m-1}(0, y^{\prime})=i_{\delta} \widehat{V}^{m-1, 1, r}(0, y^{\prime}) \quad \text { for a.e. } y^{\prime}.
\end{equation}
Since $W^{m-1}$ is compactly supported in $\Omega$, we have that $\widehat{W}^{m-1}$ is analytic in $\lambda$ variable. Using Payley--Weiner's theorem, we have
\[
\widehat{W}^{m-1}(\lambda, y^{\prime}) = i_{\delta}\widehat{V}^{m-1, 1}(\lambda, y^{\prime}) \text{ a.e. in } \Omega, \quad m\geq 3,
\]
where $\widehat{V}^{m-1, 1}$ is symmetric $(m-3)$-tensor field and
\[\widehat{V}^{m-1,1}(\lambda, y^{\prime}) = \sum_{r=0}^{\infty}\frac{{\lambda}^{r}}{r!}\widehat{V}^{m-1,1,r}(0, y^{\prime}).
\]
Hence, by using the inverse Fourier transform, we get
\begin{equation}\label{partial isotropic m-1}
W^{m-1}(y_{1}, y^{\prime}) =
\begin{cases}
i_{\delta}V^{m-1, 1}(y_{1}, y^{\prime})\quad &\mbox{for }m\geq 3,\\
0 \quad &\mbox{for } m=2.
\end{cases}
\end{equation}
By using \eqref{partial isotropic m-1} in \eqref{Int_id_h2}, we obtain
\begin{align}\label{m-2 MRT}
    \sum_{i_{1},\dots ,i_{m-2} =1}^{n} \int_{\mathbb{R}} \overline{W}_{i_{1}\cdots i_{m-2}}^{m-2}\left(\prod_{k=1}^{m-2}(e_{1}+\mathrm{i}\eta)_{i_{k}}\right) \left[-T a_{0}(y)\right]\overline{b_{0}(y)} \mathrm{~d}y = 0.
\end{align}
Note that, using compact support of $W^j$'s we can smoothly extend $W^{m-2}(\cdot, \cdot, y^{\prime\prime})=0$ over $\mathbb{R}^{2}$.
Let us now choose 
 $$
 a_{0}(y) = (z-\bar{z})\quad  \text{ then } \quad Ta_{0}(y) = -1.
 $$
By choosing $\overline{b_{0}(y)} = (z-\bar{z})^{k}e^{-\mathrm{i}\lambda z}g(y^{\prime\prime})$ for all $0\leq k \leq m-2$ and $g$ is any smooth function in $y^{\prime\prime}\in\mathbb{R}^{n-2}$, we are in the same set-up as before. Repeating the same steps as above, we see that 
\begin{equation}\label{Step_Cond_1}
   \overline{W}^{m-2}\left(y_{1}, y^{\prime}\right)=
   \begin{cases}
    i_{\delta} V^{m, 2}\left(y_{1}, y^{\prime}\right) \quad &\mbox{for }m\geq 4\\
    0 &\mbox{for }m = 2, 3
   \end{cases},
\end{equation}
where $V^{m,2}$ is a symmetric, smooth, compactly supported, $(m-4)$-tensor field.
Since we have $W^{m}= i_{\delta} \overline{W}^{m-2}$, therefore, by using \eqref{Step_Cond_1} we obtain 
\begin{equation}\label{Induction_intial}
W^{m}= 
\begin{cases}
i_{\delta}^{2} V^{m, 2} \quad &\mbox{if }m\geq 4,\\
0 &\mbox{if } m=2,3,
\end{cases} 
\quad\mbox{and}\quad 
W^{m-1}=
\begin{cases}
i_{\delta} V^{m-1,1}, \quad &\mbox{for } m\geq 3,\\
0 & \mbox{for } m=2.
\end{cases}
\end{equation}
The rest of the proof follows from the mathematical induction given below.

\smallskip

\textbf{Step 2.}
We assume, for $j\geq 1$, \begin{equation}\label{Induction step}
W^{m-r} = \begin{cases}
   i_{\delta}^{j+1-r} V^{m-r, j+1-r}, \quad &\mbox{for } 2j+2-m\leq r \leq j,\\
   0 &\mbox{for } 0\leq r < 2j+2-m,
\end{cases}
\end{equation}
for some compactly supported, smooth, symmetric tensor fields 
$V^{m-r, j+1-r}$ of order
$m+r-2j-2$. For $j=1$, the initial induction step \eqref{Induction step} follows from \eqref{Induction_intial}.
We want to show that \eqref{Induction step} holds for $j+1$, assuming it holds for $j$. That is, assuming \eqref{Induction step}, we prove
\begin{equation}\label{final_induction}
W^{m-r} = \begin{cases}
    i_{\delta}^{j+2-r} V^{m-r, j+2-r}, \quad &\mbox{for } 2j+4-m \leq r \leq j+1,\\
    0 &\mbox{for } 0\leq r < 2j+4-m.
\end{cases}
\end{equation}

We multiply \eqref{Integral Identity Main} by $h^{m-j-1}$ and assuming induction hypothesis \eqref{Induction step} along with \eqref{vanish_2}, \eqref{vanish_3} 
 we obtain
\begin{equation}\label{In_id_j+2}
\begin{aligned}
0=\sum_{r=0}^{j+1}\sum_{i_1,\cdots,i_{m-2j-2+r}=1}^{n} \int_{O} V^{m-r,j+1-r}_{i_1\dots i_{m-2j-2+r}} \left(\prod_{k=1}^{m-2j-2+r}(e_{1}+\mathrm{i}\eta)_{i_{k}}\right)\left[(-T)^{j+1-r}a_0(y)\right]\,\overline{b_0(y)} \mathrm{~d}y,
\end{aligned}
\end{equation}
as $h \to 0$.
Now we choose
$$
a_{0}(y) = 1
 \mbox{ and } \overline{b_{0}(y)} = (z-\bar{z})^{k}e^{-\mathrm{i}\lambda z} g(y^{\prime\prime})\quad  \text{ for all } 0\leq k \leq m ,
 $$ 
 where $ g$ is an arbitrary smooth function in $y''\in \mathbb{R}^{n-2}$ and $\lambda \in \mathbb{R}$. We notice that 
 $$
 T^{j+1-r} a_{0}(y) = 0 \quad \text { for all } \quad 0\leq r\leq j.
 $$
Therefore, we get,
 \begin{equation*}
\begin{aligned}
0=\sum_{i_1,\cdots,i_{m-j-1}=1}^{n} \int_{O} V^{m-j-1, 0}_{i_1\dots i_{m-j-1}} \left(\prod_{k=1}^{m-j-1}(e_{1}+\mathrm{i}\eta)_{i_{k}}\right)y_2^{k}e^{-\mathrm{i}\lambda z} g(y^{\prime\prime}) \mathrm{~d}y_2,
\end{aligned}
\end{equation*}
for all $0\leq k \leq m $ and $\lambda \in \mathbb{R}.$ Now, proceeding similarly as we have done in step 1, we have
\begin{equation}\label{Induction_1}
    W^{m-j-1}=i_{\delta}V^{m-j-1,1}.
\end{equation}
Note that, using \eqref{Induction_1} in \eqref{In_id_j+2} we obtain
\begin{equation}\label{eq_1}
    \begin{aligned}
0=\sum_{r=0}^{j}\sum_{i_1,\cdots,i_{m-2j-2+r}=1}^{n} \int_{O} V^{m-r,j+1-r}_{i_1\dots i_{m-2j-2+r}} \left(\prod_{k=1}^{m-2j-2+r}(e_{1}+\mathrm{i}\eta)_{i_{k}}\right)\left[(-T)^{j+1-r}a_0(y)\right]\,\overline{b_0(y)}
    \, \mathrm{~d}y.
\end{aligned}
\end{equation}
Now, we choose $a_{0}(y) = (z-\bar{z}) 
 \mbox{ and } \overline{b_{0}(y)} = (z-\bar{z})^{k}e^{-\mathrm{i}\lambda z} g(y^{\prime\prime}), \text { for all } 0\leq k \leq m$ where $ g$ is an arbitrary smooth function in $y''$ variables and $\lambda \in \mathbb{R}$. Simplifying \eqref{eq_1} we get,
 \begin{equation*}
\begin{aligned}
0=\sum_{i_1,\cdots,i_{m-j-2}=1}^{n} \int_{O} V^{m-j, 1}_{i_1\dots i_{m-j-2}} \left(\prod_{k=1}^{m-j-2}(e_{1}+\mathrm{i}\eta)_{i_{k}}\right)y_2^{k+1}e^{-\mathrm{i}\lambda z} g(y^{\prime\prime})
    \,\mathrm{~d}y_2.
\end{aligned}
\end{equation*}
Again, we use the same technique as in Step 1; to obtain
\begin{equation}\label{Induction_2}
    V^{m-j,1}=
    \begin{cases}
     i_{\delta}V^{m-j,2} \quad &\mbox{for } m-j\geq 4.\\
     0 &\mbox{for } m-j<4.
    \end{cases}
\end{equation}
Since, for $m-j\geq 4$ we have $W^{m-j}=i_{\delta}V^{m-j,1}$ in the induction assumption, therefore, using \eqref{Induction_2} we get 
\[
W^{m-j} = i_{\delta}^{2}V^{m-j,2}, \quad \mbox{for }m-j\geq 4.
\]
Repeating this argument for $l$-times, for $0\leq l \leq j+1$ and using amplitudes
\[
a_0(y) = (z-\overline{z})^{l} \quad \mbox{and} \quad \overline{b_0(y)}= (z-\bar{z})^{k}e^{-\mathrm{i}\lambda z}g(y^{\prime\prime}).
\]
We obtain 
\[
W^{m-j+l-1} =
\begin{cases}
i_{\delta}^{l+1}V^{m-j+l-1,l+1} &\mbox{ for}\quad m-j-l\geq 3\\
0 & \mbox{ for}\quad m-j-l < 3
\end{cases}, \quad 0\leq l\leq j+1, \quad j\geq 1.
\]
The proof is now complete by observing that, setting $j$ large and $0\leq l\leq j+1$ suitably, we can make $m-j-l < 3$ for any fixed $m\geq2$. Hence, by mathematical induction, we get all the coefficients $W^{m-k}$, $0\leq k \leq m$ are zero, that is,
\[
A^{m-k} = \widetilde{A}^{m-k}, \quad \mbox{for } 0\leq k \leq m.
\]
\end{proof}

\section*{Acknowledgements}
The authors want to express their gratitude to the Indian Institute of Science Education and Research, Bhopal, and the Department of Mathematics at IISER-B for their invaluable support. S.B. was partially supported by the grant: SRG/2022/00129 from the Science and Engineering Research Board, India.

\bibliographystyle{alpha}
\bibliography{bibliography.bib}

\end{document}